\documentclass[12pt,reqno,oneside]{amsart}
\usepackage{amsmath}
\usepackage{amssymb}
\usepackage{amsthm}
\usepackage{graphics}
\usepackage{eucal}
\usepackage{bm}
\usepackage[colorlinks]{hyperref}
\usepackage[capitalize,nameinlink,noabbrev]{cleveref}
\usepackage{mathrsfs}
\usepackage[normalem]{ulem}
\usepackage{color}
\usepackage[dvipsnames]{xcolor}
\usepackage{mathtools}
\usepackage{geometry}\geometry{margin=1.2 in}
\newtheorem{theorem}{Theorem}[section]
\newtheorem{definition}{Definition}[section]

\newtheorem{lemma}{Lemma}[section]

\newtheorem{remark}{Remark}[section]

\numberwithin{equation}{section}

\newcommand{\GL}{\operatorname{GL}}

\newcommand{\supp}{\operatorname{supp}}
\newcommand{\cov}{\operatorname{cov}}

\newcommand{\bq}{\mathbf{q}}
\newcommand{\bx}{\mathbf{x}}

\newcommand{\cL}{\mathcal{L}}
\newcommand{\bp}{\mathbf{p}}

\newcommand{\by}{\mathbf{y}}

\newcommand{\Q}{\mathbb{Q}}

\newcommand{\Z}{\mathbb{Z}}

\newcommand{\R}{\mathbb{R}}
\newcommand{\N}{\mathbb{N}}

\newcommand\abs[1]{\left\vert#1\right\vert}
\newcommand\norm[1]{\left\lVert#1\right\rVert}

\setcounter{tocdepth}{1}
\usepackage{mathtools}
\usepackage{tikz-cd}
\usepackage{graphicx}

\begin{document}

\title{On absolutely friendly measures on $\Q_S^d$  }

\author{Shreyasi Datta and Justin Liu}
\address{\textbf{Shreyasi Datta}\\
Department of Mathematics, Uppsala University}
\email{shreyasi1992datta@gmail.com\\
shreyasi.datta@math.uu.se}
\address{\textbf{Justin Liu}\\
Department of Mathematics,University of Michigan, Ann Arbor, MI 48109-1043}
\email{justliu@umich.edu }

\date{} 
\maketitle
\begin{abstract}
    In this paper, we extend the work of Pollington and Velani in \cite{PV} to an $S$-arithmetic set-up, where $S$ is a finite set of valuations of $\Q$.  In particular, for an \textit{absolutely friendly} measure supported on a compact set in $\Q_S^d$, we give a summation condition on approximating function $\psi$ such that $\mu$ almost no point in the compact set is $\psi$ approximable. The crucial ingredient is a version of the simplex lemma that we prove dynamically.
\end{abstract} 
\section{Introduction}

This paper aims to study Diophantine approximation in $\Q_S^d=\prod_{\nu\in S}\Q_\nu^d$, where $S$ is a finite set of valuations of $\Q$, with respect to an \textit{absolutely friendly} measure. When $S=\{\infty\}$, $\Q_\infty:=\R$. To start with, let us recall Dirichlet's theorem in $\R^d.$ For any $\bx \in \R^d$, there exist infinitely many $\bp \in \Z^d$ and $q \in \N$ such that $$\left\Vert\bx - \frac{\bp}{q}\right\Vert_\infty < q^{-(d+1)/d}.$$ Here $\Vert \cdot\Vert_\infty$ is the sup norm in $\R^d.$

Let $\psi:\N\to \R^{+}$ be a monotonically decreasing function. Let us consider the following set \begin{equation}
    \mathcal{W}(\psi):=\left\{\bx\in \R^d~|~\exists \text{ infinitely many } (\bp,q) \in \Z^d \times \N, \text{s.t.} \left\Vert\bx - \frac{\bp}{q}\right\Vert_\infty < \psi(q)\right\}.
\end{equation}
Now let $K$ be a compact set that supports a non-atomic finite measure $\mu$ and $\mathcal{W}_K(\psi):=\mathcal{W}(\psi)\cap K.$ 
When $K=[0,1]^d$ and $\mu=\lambda$ is the Lebesgue measure supported on $K$, using the convergence Borel-Cantelli lemma, it is not hard to see that 
$$\lambda(\mathcal{W}(\psi)\cap {[0,1]^d})=0 \text{ if } \sum_{n=1}^\infty 2^n(2^n\psi(2^n))^d<\infty.$$
 In \cite{PV}, Pollington and Velani considered the class of absolutely friendly measures $\mu$, which includes measures supported
on self-similar sets satisfying the open set condition. A weaker notion of absolutely friendly, called \textit{friendly}, first appeared in \cite{KLW}. One of the main results in \cite[Theorem 1]{PV} gives a summation condition on the approximating function $\psi$ such that $\mu(\mathcal{W}_K(\psi))=0.$ For Cantor’s middle-third set, this theorem first appeared in a work of Weiss, \cite[Theorem 0.1]{W2001}. This summation condition is not optimal, and finding the optimal condition is the content of a potential Khintchine-type theorem for this class of measures, which is currently an open problem. Recently, there have been some serious developments of this problem in the works of \cite{KhLu2023} and  \cite{Yu2023}, where the optimal condition was achieved for suitable fractals satisfying open set condition and with high Hausdorff dimension. All of these problems mentioned here make sense in a more general $S$-arithmetic set-up, and very little is explored in this direction. Our goal for this paper is to initiate that study. 

We first state Dirichlet's theorem in $S$-arithmetic set-up; see \cite[\S 11.2]{KT}. Given $\bx \in B(0,1)^d\subset \Q_S^d$, there exist infinitely many $\tilde\bq=(\bq,q_0) \in \Z^{d+1}\setminus \{0\}$ such that $$\Vert q_0\bx + \bq\Vert^l_S \leq \begin{cases}\frac{1}{\norm{\tilde\bq}^{1+\frac{1}{d}}_\infty},~ \infty\notin S\\
~ \\
 \frac{1}{\vert q_0\vert}^{\frac{1}{d}}_\infty,~ \infty\in S
\end{cases},$$ where $\# S=l.$ Here $\Vert\cdot\Vert_S$ is the max norm that we define in \S \ref{def}. In \cite{KT}, Dirichlet's theorem was stated in the dual setting, but a similar proof gives the above statement. Now, we define the following set;
\begin{equation*}
    W^S(\psi):=\left\{\bx\in\Q_S^d~\left|~\left\Vert \bx+\frac{\bq}{q_0}\right\Vert^l_S\leq \begin{cases} \psi(\Vert\tilde\bq\Vert_\infty), \text{ if } \infty\notin S \\ ~~\\
   \psi(\vert q_0\vert_\infty) \text{ if } \infty\in S
    \end{cases}\text{ for i.m. } \tilde{\bq}=(\bq,q_0)\in\Z^{d+1}\right.\right\}.
\end{equation*} Here i.m. is an abbreviation of \textit{infinitely many}. We call points in $W^S(\psi)$ as $\psi$ approximable and refer the readers to \S \ref{def} for the definition and notation.

The goal is to study $$ W^S(\psi)\cap \supp\mu,$$ where $\mu$ is an absolutely friendly measure in $\Q_S^d.$ Our main theorem is as follows.
\begin{theorem}\label{main1}
Suppose $\mu$ is an $\alpha$-absolutely friendly measure which is supported on a compact set in $\Q_S^d$. Then,
    \begin{equation}\mu(W^S(\psi)\cap \supp\mu)=0 \text{ if } \sum_{n \in \N} (2^{n\frac{d+1}{d}}\psi(2^n))^\alpha < \infty. \end{equation}
\end{theorem}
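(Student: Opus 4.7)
The plan is to carry over the convergence-side Borel--Cantelli strategy of Pollington--Velani \cite{PV} to the $S$-arithmetic setting, with the dynamical simplex lemma of the paper as the principal new tool.

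First, for each $n \geq 1$, I would let $A_n \subseteq \supp\mu$ consist of those $\bx$ admitting a rational $-\bq/q_0$ whose height (either $\Vert\tilde{\bq}\Vert_\infty$ or $\vert q_0\vert_\infty$, according to whether $\infty \in S$) lies in the dyadic range $[2^n, 2^{n+1}]$ and satisfies the approximation condition at that scale. By monotonicity of $\psi$ we have $W^S(\psi) \cap \supp\mu \subseteq \limsup_n A_n$, so by the convergence Borel--Cantelli lemma the theorem reduces to the bound $\sum_n \mu(A_n) < \infty$.

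For each $n$ I would cover $\supp\mu$ by $S$-adic balls $\{B_{i,n}\}$ at the Dirichlet scale, so that by the power-law lower bound that is part of absolute friendliness the number of balls meeting $\supp\mu$ is at most $\lesssim 2^{n\alpha(d+1)/d}$. Within each such ball the dynamical simplex lemma promised earlier in the paper forces all rationals of height in $[2^n,2^{n+1}]$ appearing in the approximation problem to lie in a single proper affine subspace $H_{i,n} \subset \Q_S^d$; consequently $B_{i,n} \cap A_n$ is contained in a $\psi(2^n)$-neighborhood of $H_{i,n}$ measured in the pseudo-metric coming from $\Vert\cdot\Vert_S^l$. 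The absolute decay property of $\mu$ then yields $\mu(B_{i,n} \cap H_{i,n}^{(\epsilon)}) \lesssim (\epsilon/r_n)^\alpha \mu(B_{i,n})$ for the $\epsilon$-neighborhood, which when combined with the power-law bound $\mu(B_{i,n}) \lesssim r_n^\alpha$ gives $\mu(B_{i,n}\cap A_n) \lesssim \psi(2^n)^\alpha$. Multiplying by the number of balls produces $\mu(A_n) \lesssim (2^{n(d+1)/d}\psi(2^n))^\alpha$, and summing over $n$ completes the argument.

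The main obstacle I expect is coordinating the various scales: the Dirichlet exponent $2^{-n(d+1)/d}$, the pseudo-norm $\Vert\cdot\Vert_S^l$ arising from the product over local places, and the separate normalizations needed when $\infty \in S$ versus $\infty \notin S$, must be aligned so that the threshold in the simplex lemma, the cover radius, and the absolute-decay exponent combine to give exactly the exponent $\alpha$ on $2^{n(d+1)/d}\psi(2^n)$ in the series. Once this bookkeeping is in place, the dynamical simplex lemma does the essential counting, reducing the question to a hyperplane-neighborhood estimate that is precisely what the absolutely friendly hypothesis is designed to provide.
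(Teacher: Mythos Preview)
Your overall strategy---dyadic height decomposition, covering by small balls, simplex lemma to trap the relevant rationals in a hyperplane, absolute decay, Borel--Cantelli---is exactly the paper's. But there is a real gap in how you assemble the estimate. You invoke a ``power-law lower bound that is part of absolute friendliness'' to count the covering balls, and then a power-law upper bound $\mu(B_{i,n})\lesssim r_n^\alpha$ on each. Neither is part of the hypothesis here: in this paper absolutely $\alpha$-friendly means only absolutely $\alpha$-decaying together with, when $\infty\in S$, doubling of $\mu_\infty$ (Definitions~\ref{decaying} and~\ref{absf}). No power-law on $\mu(B(\bx,r))$ is assumed, and your argument as written requires matching upper and lower power laws with the \emph{same} exponent, i.e.\ Ahlfors regularity, which is strictly stronger than what is available.

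The paper sidesteps this entirely by taking the cover $\mathcal{D}_n$ to be \emph{disjoint} (Lemma~\ref{covering1})---automatic for non-archimedean balls, and arranged via the $5r$-covering lemma for the archimedean factor. After absolute decay gives $\mu(A_n\cap D_n)\ll(\psi(2^n)^{1/l}/r_n)^{\alpha l}\mu(D_n)$ on each ball, one simply sums and uses disjointness: $\sum_{D_n\in\mathcal{D}_n}\mu(D_n)\le\mu(K)$, with no ball-counting or power-law needed. With $r_n\asymp 2^{-(d+1)n/(dl)}$ the prefactor is already $(2^{n(d+1)/d}\psi(2^n))^\alpha$. Note also that the decay exponent in Definition~\ref{decaying} is $\alpha l$, not $\alpha$ as you wrote; this factor of $l=\#S$ is precisely what reconciles the $\Vert\cdot\Vert_S^l$ pseudo-norm with the per-place ball radii and resolves the scale bookkeeping you flagged. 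The doubling of $\mu_\infty$ enters only once, to pass from a dilated real ball back to $D_{n,\infty}$.
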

The above theorem is $S$-arithmetic analogue of the main theorem in \cite[Theorem 1]{PV}. The main ingredient of the proof is a version of the simplex lemma in $\Q_S^d$. One obstruction in this set-up is forming a simplex in the space $\Q_S^d$ that we prove. For even $\Q_p^d$, it was unclear to us how to work with higher dimensional triangles (i.e. simplexes) in that space. So, instead, we understand the problem as a fundamental domain of a lattice in a higher dimensional space. We increase both dimension and valuation. For instance, in the case of $\Q_p^d$, we study the fundamental domain of a lattice (of large covolume) in $\Q_p^{d+1}\times \R^{d+1}.$ 

The proof is dynamical in nature, and we treat $\infty\in S$ and $\infty\notin S$ cases separately. We embed the problem in $\Q_{S,\infty}^d$, where $\Q_{S,\infty}$ is defined in \S \ref{def}. We construct a lattice in  $\Q_{S,\infty}^{d+1}$ whose fundamental domain serves as a `simplex' roughly speaking. Then we show that the volume of the fundamental domain exceeds the volume of the respective ball in the simplex lemma. In \cite{KTV2006}, a proof of the simplex lemma in $\Q_p^2$ was given. Our proof of the simplex lemma differs from theirs, and it is achieved more generally for $\Q_S^n.$ 

\subsection{Notation and definitions}\label{def}

Given some finite set of valuations $S$, we define $$\Q_{S, \infty} := \begin{cases}\begin{aligned} &\prod_{\nu\in S}\Q_\nu \text{ if } \infty\in S,\\ 
&\prod_{\nu\in S}\Q_\nu \times \R \text{ if } \infty\notin S
\end{aligned}\end{cases}$$  and $\Z_S = \{x\in \Q~|~\vert x\vert_\sigma=1, \sigma\notin S\setminus \infty\}$. Note that $\Z_S$ is diagonally embedded in $\Q_{S,\infty}$, and it is well known that this is a lattice in $\Q_{S, \infty}$.

Given $\bx = (\bx_\nu)\in \Q_S^d$, where $\bx_\nu=(x_{\nu,i}) \in \Q_\nu^d$ for $\nu\in S$, and $1\leq i\leq d,$ we denote $\norm{\bx}_S := \max_{\nu\in S}\norm{\bx_\nu}_\nu,$ where $\Vert \cdot\Vert_\nu$ is the sup norm on $\Q_\nu^d.$ In $\Q_S$, we will denote the same norm as $\vert \cdot\vert_S.$ For any $m\geq 1$ integer, suppose $\lambda_\nu$ denotes the normalized Haar measure on $\Q_\nu^m$, where $\lambda_\nu(\Z_\nu^m) = 1$, and $\lambda_S=\prod_{\nu\in S}\lambda_\nu$ is a measure on $\Q_S^m.$ When $\nu=\infty,$ $\lambda_\infty=\lambda$ is the Lebesgue measure, and $\Z_\infty=[0,1)$.
Given $\bx \in \Q_S^d$, we also define the content $c(\bx)$ as $c(\bx) = \prod_{\nu\in S}\norm{\bx_\nu}_\nu.$ 

Let us recall the following definition that was coined in \cite[\S 2]{KLW}, and \cite[\S 1]{PV}. A measure $\mu_\infty$ on $\R^d$ is doubling if for all $\bx \in K$, there exist positive constants $D$ and $r_0$ such that for all $0<r < r_0$, $$
\mu_\infty(B(\bx,2r)) \leq D \mu_\infty (B(\bx,r)).
$$ Here and elsewhere by $B(\bx,r)$ we mean a ball of radius $r$ centered at $\bx$ in an ambient space. If $\bx=(\bx_\nu)_{\nu \in S}\in\Q_S^d$,  then $B(\bx,r)=\prod_{\nu\in S} B_\nu(\bx_\nu,r),$ where $B_\nu(\bx_\nu,r):=B(\bx_\nu,r)$ is a ball of radius $r$ in $\Q_\nu^d$.
For any affine subspace $\mathcal{L}_\nu\subset\Q_\nu^d$, and $\varepsilon>0$, we define $\cL:=\prod_{\nu\in S} \cL_\nu,$ and 
$$
\mathcal{L}^{\varepsilon}:=\{\bx\in\Q_S^d~|~\inf_{\by\in\mathcal{L}}\Vert \bx-\by\Vert_S< \varepsilon\}.
$$
\begin{definition}\label{decaying}
 Let $\mu= \prod_{\nu\in S}\mu_\nu$ be a non-atomic Borel finite measure on $\Q_S^d$, which is supported on $K:=\prod_{\nu\in S} K_\nu \subseteq \Q_S^d$, where each $\mu_\nu$ is supported on compact set $K_\nu$ in $\Q_\nu^d$. Let $\alpha>0$. We say $\mu$ is \textit{absolutely $\alpha$-decaying} if there exist strictly positive constants $C, r_0$ such that for any hyperplane $\cL_\nu \subset \Q_\nu^d$ and any $\varepsilon>0$, we have $$\mu\left(B(\bx,r)\cap \cL^{\varepsilon}\right) \leq C\left(\frac{\varepsilon}{r}\right)^{\alpha l }\mu(B(\bx,r)),$$ for all $\bx \in K$ and $r < r_0$, where $\cL:=\prod_{\nu\in S}\cL_\nu.$
 
\end{definition}
Note that the above definition is equivalent to say that each $\mu_\nu$ is absolutely $\alpha$-decaying as in \cite{PV}. 


\begin{definition}\label{absf}
    A measure $\mu=\prod_{\nu\in S}\mu_\nu$ on $\Q_S^d$ is called absolutely $\alpha$-friendly if $\mu$ is absolutely $\alpha$-decaying, and if $\infty \in S$, then $\mu_\infty$ is also doubling.
\end{definition}

\section{One-dimensional simplex lemma}
The simplex lemma for $\R$ concerns the separation of rationals on the real line. More explicitly, it says that any interval of length $1/2^k, k\in\N,$ can contain at most one rational point $q_0/q$, where $1\leq q< 2^k.$ For $\R^n$, a version of simplex lemma was proved in \cite{PV} and \cite[Lemma 4]{KTV2006}. Apart from the problems we deal with in this paper, the simplex lemma has also been a crucial tool in showing the full Hausdorff dimension for weighted badly approximable vectors; see the survey \cite{BRV}. 

In recent years, there have been many setups where the appropriate simplex lemma has been proved. For instance,  a simplex lemma was proved for rational points on a rational quadric hypersurface in \cite{Klein_DeSax2018} and for projective space $\mathbb{P}^n(\R)$ in \cite{HH2017}. 

In this section, we give proof of one-dimensional simplex lemma for $\Q_S$, whose proof is simpler than the higher dimensional case. Also, the technique of the proof will be different in higher dimensions. 

\begin{lemma}\label{dim1_S_without_infinity}
Let $\infty\notin S$ and $k\in\N$. Given two distinct rational numbers $\frac{m}{n}$ and $\frac{m'}{n'}$, if \begin{equation}\label{eq1}
    \begin{aligned}
        2^k \leq \Vert (m,n)\Vert_\infty < 2^{k+1}, ~~2^k \leq  \Vert (m',n')\Vert_\infty < 2^{k+1},
    \end{aligned}\end{equation}
    then $$\abs{\frac{m}{n} - \frac{m'}{n'}}_S^l > \frac{1}{2^{2k+4}}.$$
\end{lemma}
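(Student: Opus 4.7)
The plan is to reduce the claim to a purely arithmetic bound via the product formula for $\Q$. Write
\[
\frac{m}{n} - \frac{m'}{n'} \;=\; \frac{r}{nn'}, \qquad r := mn' - m'n,
\]
which is a nonzero integer since the two fractions are distinct. From the hypothesis \eqref{eq1}, each of $|m|_\infty,|n|_\infty,|m'|_\infty,|n'|_\infty$ is strictly less than $2^{k+1}$, so a triangle inequality in the archimedean place gives the crude bound $|r|_\infty < 2^{2k+3}$. This is the only input we need from \eqref{eq1}; the rest of the proof is valuative.

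Next I would convert the target inequality into a product. Since $\#S = l$, for any rational $x$,
\[
|x|_S^l \;=\; \Bigl(\max_{\nu\in S}|x|_\nu\Bigr)^{l} \;\geq\; \prod_{\nu\in S}|x|_\nu,
\]
simply because each factor in the product is at most the maximum. So it suffices to show
\[
\prod_{\nu\in S}\Bigl|\tfrac{r}{nn'}\Bigr|_\nu \;>\; \frac{1}{2^{2k+4}}.
\]

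Now apply the product formula $\prod_{\nu}|x|_\nu = 1$ over all places of $\Q$ to $x=r/(nn')$, split the finite places into $S$ and its complement, and use that $\infty\notin S$ to get
\[
\prod_{\nu\in S}\Bigl|\tfrac{r}{nn'}\Bigr|_\nu \;=\; \frac{|nn'|_\infty}{|r|_\infty}\cdot\prod_{\nu\notin S\cup\{\infty\}}\Bigl|\tfrac{r}{nn'}\Bigr|_\nu^{-1}.
\]
For the remaining product, I would use that $r$ and $nn'$ are integers, so $|r|_\nu\le 1$ and $|nn'|_\nu\le 1$ at every finite place, giving $|r/(nn')|_\nu^{-1}\ge |nn'|_\nu$ at every $\nu\notin S\cup\{\infty\}$. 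Combining with the product formula for $nn'$ yields $\prod_{\nu\notin S\cup\{\infty\}}|r/(nn')|_\nu^{-1}\ge 1/|nn'|_\infty$. Plugging this back gives
\[
\prod_{\nu\in S}\Bigl|\tfrac{r}{nn'}\Bigr|_\nu \;\geq\; \frac{1}{|r|_\infty} \;>\; \frac{1}{2^{2k+3}} \;>\; \frac{1}{2^{2k+4}},
\]
which finishes the proof.

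The only potentially tricky step is the handling of the places outside $S\cup\{\infty\}$: one must avoid any cancellation between $r$ and $nn'$ giving a weak bound. The observation that both numerator and denominator are integers (so every non-archimedean factor is at most $1$) makes the estimate clean, and the passage from $\max^l$ to a product over $S$ is what lets a purely one-place archimedean estimate on $r$ control the $S$-norm.
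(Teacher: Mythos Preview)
Your proof is correct and follows essentially the same route as the paper's. Both arguments write the difference as $r/(nn')$ with $r=mn'-m'n\in\Z\setminus\{0\}$, bound $|r|_\infty<2^{2k+3}$ from \eqref{eq1}, pass from $|\cdot|_S^l$ to $\prod_{\nu\in S}|\cdot|_\nu$, and then use integrality plus the product formula to obtain $\prod_{\nu\in S}|r/(nn')|_\nu\ge 1/|r|_\infty$. The only cosmetic difference is that the paper invokes the partial product $\prod_{\nu\in S}|w|_\nu\,|w|_\infty\ge 1$ for $w\in\Z\setminus\{0\}$ directly, whereas you unfold this via the full product formula and an explicit estimate on the places outside $S\cup\{\infty\}$; the content is identical.
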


\begin{proof} Note that $\vert mn'-m'n\vert_\infty\leq 2^{2k+3}$ and $\left\vert \frac{m}{n}-\frac{m'}{n'}\right\vert_S^l\geq \prod_{\nu\in S}\vert \frac{m}{n}-\frac{m'}{n'}\vert_\nu.$ Also,
$$
\prod_{\nu\in S}\left\vert \frac{m}{n}-\frac{m'}{n'}\right\vert_\nu \left\vert  \frac{m}{n}-\frac{m'}{n'}\right\vert_\infty \geq \frac{1}{\prod_{\nu\in S}\vert nn'\vert_\nu \vert nn'\vert_\infty},
$$ since $mn'-m'n\in \Z\setminus \{0\}.$
The above implies 
$$
\left \vert \frac{m}{n}-\frac{m'}{n'}\right\vert_S^l\geq \frac{\vert nn'\vert_\infty}{\vert nn'\vert_\infty \vert mn'-m'n\vert_\infty}>\frac{1}{2^{2k+4}},
$$ because $\vert n\vert_\nu\leq 1, \vert n'\vert_\nu\leq 1$.
\end{proof}

\begin{lemma}\label{dim1_S_with_infinity}
    Let $\infty\in S$. Given two distinct rationals $\frac{m}{n}$ and $\frac{m'}{n'}$, if \begin{equation}\label{eq2}
    \begin{aligned}
        & 2^k \leq \vert n\vert_\infty < 2^{k+1}, ~~~ 2^k \leq  \vert n'\vert_\infty < 2^{k+1},
    \end{aligned}\end{equation}
    then $$\abs{\frac{m}{n} - \frac{m'}{n'}}_S^l > \frac{1}{2^{2k+2}}.$$
\end{lemma}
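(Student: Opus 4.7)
The plan is to follow the same template as Lemma~\ref{dim1_S_without_infinity}, exploiting the product formula applied to the nonzero integer $mn' - m'n$. The key structural difference is that since $\infty \in S$, the infinite place contributes to the $S$-norm directly, rather than appearing as an extra factor on both sides of the estimate. I expect the improved constant $2^{-(2k+2)}$ (compared to $2^{-(2k+4)}$ in the $\infty\notin S$ case) to arise precisely from the fact that we no longer double-count the infinite place.

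Concretely, the first step is to write
$$\left|\frac{m}{n}-\frac{m'}{n'}\right|_S^l = \prod_{\nu\in S}\frac{|mn'-m'n|_\nu}{|nn'|_\nu} = \frac{\prod_{\nu\in S}|mn'-m'n|_\nu}{\prod_{\nu\in S}|nn'|_\nu},$$
and then estimate the numerator from below and the denominator from above. For the numerator, I would invoke the product formula $\prod_{\text{all }\nu}|mn'-m'n|_\nu = 1$ together with the fact that $|mn'-m'n|_\nu \leq 1$ at every finite place $\nu \notin S$ (since $mn'-m'n$ is a nonzero integer), to conclude $\prod_{\nu\in S}|mn'-m'n|_\nu \geq 1$. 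For the denominator, since $n,n'\in\Z$ we have $|nn'|_\nu \leq 1$ at every finite $\nu\in S$, so $\prod_{\nu\in S}|nn'|_\nu \leq |nn'|_\infty$. The hypothesis \eqref{eq2} then gives $|nn'|_\infty < 2^{k+1}\cdot 2^{k+1} = 2^{2k+2}$, and combining the two bounds yields the claimed estimate.

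There is no serious obstacle here: the argument is essentially a single application of the product formula. The only thing to be careful about is the bookkeeping between places in $S$ and outside $S$. In particular, one should not include a separate $|\cdot|_\infty$ factor as in Lemma~\ref{dim1_S_without_infinity}, since $\infty$ is already one of the places of $S$, which is exactly what accounts for the sharper constant.
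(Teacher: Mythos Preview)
Your approach is essentially identical to the paper's: both pass from $|\cdot|_S^l$ to $\prod_{\nu\in S}|\cdot|_\nu$, invoke the product formula on the nonzero integer $mn'-m'n$ to get $\prod_{\nu\in S}|mn'-m'n|_\nu\geq 1$, and bound $\prod_{\nu\in S}|nn'|_\nu\leq |nn'|_\infty<2^{2k+2}$. One slip to fix: your first displayed line should be an inequality, not an equality, since $|\cdot|_S$ is defined as the \emph{maximum} over $\nu\in S$, so $|\cdot|_S^l=(\max_{\nu\in S}|\cdot|_\nu)^l\geq\prod_{\nu\in S}|\cdot|_\nu$; with ``$=$'' replaced by ``$\geq$'' the argument goes through exactly as written.
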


\begin{proof} 
    Note  \begin{align*}
     \abs{\frac{m}{n} - \frac{m'}{n'}}_S^l\geq  \prod_{\nu\in S\setminus \infty}\abs{\frac{m}{n} - \frac{m'}{n'}}_\nu \abs{\frac{m}{n} - \frac{m'}{n'}}_\infty &\geq \frac{1}{\prod_{\nu\in S\setminus\infty}\abs{nn'}_\nu \abs{nn'}_\infty} \geq \frac{1}{\abs{nn'}_\infty} > \frac{1}{2^{2k+2}}.
    \end{align*}
    
\end{proof}

\section{Proof of Theorem \ref{main1}}
\subsubsection*{Notations:} Given $\mathbf{r}_1,\dots,\mathbf{r}_{d+1} \in \Q^d$, we define a $(d+1) \times (d+1)$ matrix as follows: 
\begin{equation}\label{defnA}
    A(\mathbf{r}_1,\dots,\mathbf{r}_{d+1}) := \begin{bmatrix}
    1 & r_1^{(1)} & \dots & r_d^{(1)} \\
    \vdots & \vdots & \ddots & \vdots \\
    1 & r_1^{(d+1)} & \dots & r_d^{(d+1)}  
\end{bmatrix},
\end{equation}
where each $\mathbf{r}_i = (r_1^{(i)}, \dots, r_d^{(i)})$, $1\leq i\leq d+1$. Then, $  A(\mathbf{r}_1,\dots,\mathbf{r}_{d+1})\subseteq \Q_{S,\infty}^{d+1}$.
\begin{lemma}\label{lattice}
Let $A = A(\mathbf{r}_1,\dots,\mathbf{r}_{d+1})$. Then $A\Z_S^{d+1}$ is a lattice in $\Q_{S,\infty}^{d+1}$ if and only if $\mathbf{r}_1,\dots,\mathbf{r}_{d+1}$ are not on any hyperplane in $\Q^d$. This is equivalent to saying  $\mathbf{r}_1,\dots,\mathbf{r}_{d+1}$ are linearly independent over $\Q.$
\end{lemma}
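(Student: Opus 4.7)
The plan is to reduce the claim to a linear-algebraic question about the $(d+1) \times (d+1)$ rational matrix $A$. The rows of $A$ are $(1, \mathbf{r}_i)$ for $i = 1, \dots, d+1$, so by elementary linear algebra they are linearly dependent over $\Q$ if and only if there exists a nonzero $(a_0, a_1, \dots, a_d) \in \Q^{d+1}$ with $a_0 + \sum_{j=1}^{d} a_j r_j^{(i)} = 0$ for every $i$. This is precisely the condition that all the $\mathbf{r}_i$ lie on the common affine hyperplane $\{ \bx \in \Q^d : a_0 + \sum_{j} a_j x_j = 0 \}$. Hence the geometric condition that no such hyperplane contains all the $\mathbf{r}_i$ is equivalent to the $\Q$-linear independence of the augmented rows $(1, \mathbf{r}_i)$, which is in turn equivalent to $\det A \neq 0$. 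This disposes of the final equivalence in the lemma.

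Next, assuming $\det A \neq 0$, I would use that $A \in \GL_{d+1}(\Q)$ and that, via the diagonal embedding $\Q \hookrightarrow \Q_{S,\infty}$, the matrix $A$ lifts to an element of $\GL_{d+1}(\Q_{S,\infty})$ whose inverse is the diagonal image of $A^{-1} \in \GL_{d+1}(\Q)$. Consequently, $A$ acts componentwise on $\Q_{S,\infty}^{d+1}$ as a homeomorphism. Since $\Z_S$ is recorded in \S\ref{def} as a lattice in $\Q_{S,\infty}$, the product $\Z_S^{d+1}$ is a lattice in $\Q_{S,\infty}^{d+1}$, and a homeomorphism preserves both discreteness and cocompactness, so $A\Z_S^{d+1}$ is again a lattice.

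For the converse, suppose $\det A = 0$. The $\Q$-rank of $A$ agrees with its rank as a matrix over each $\Q_\nu$ and over $\R$, since the rank is determined by the vanishing or non-vanishing of rational minors. Hence, on each factor, the image of $A$ is a proper closed linear subspace of Haar measure zero, and therefore $A(\Q_{S,\infty}^{d+1})$ is a closed Haar-null subset of $\Q_{S,\infty}^{d+1}$. Since $A\Z_S^{d+1}$ is contained in this null set, it cannot be cocompact in $\Q_{S,\infty}^{d+1}$, so it fails to be a lattice.

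No serious obstacle is anticipated here; the content is essentially the observation that the diagonal action of $\GL_{d+1}(\Q)$ on $\Q_{S,\infty}^{d+1}$ is by homeomorphisms, which transports the given lattice structure of $\Z_S^{d+1}$ through $A$, combined with the standard translation between $\det A \neq 0$ and affine independence of the $\mathbf{r}_i$. The real work of the paper lies in the higher-dimensional simplex lemma that invokes this setup, not in this lemma itself.
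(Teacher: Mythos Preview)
Your argument is correct and follows essentially the same approach as the paper: both reduce the hyperplane condition to $\det A \neq 0$ over $\Q$, then use that this is equivalent to $A \in \GL_{d+1}(\Q_{S,\infty})$ (the paper via \cite[Lemma 7.1]{KT}, you via the diagonal embedding of $A^{-1}$ and rank-by-minors), which in turn is equivalent to $A\Z_S^{d+1}$ being a lattice. Your write-up is somewhat more explicit about both implications, whereas the paper treats one direction and says the other ``follows similarly,'' but the underlying idea is the same.
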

\begin{proof}
    First, suppose that $\mathbf{r}_1,\cdots,\mathbf{r}_{d+1}$ lie on a line in $\Q^d$. Then, there exists $(c_1,\dots,c_d,b) \in \Q^{d+1}\setminus\{0\}$ such that $c_1x_1 + \dots + c_dx_d +b = 0$. Thus $$c_1\begin{bmatrix}
    r_1^{(1)} \\ \vdots \\ r_1^{(d+1)}
    \end{bmatrix} + \dots + c_d \begin{bmatrix}
    r_d^{(1)} \\ \vdots \\ r_d^{(d+1)}
    \end{bmatrix} + b \begin{bmatrix}
        1 \\ \vdots \\ 1
    \end{bmatrix} = 0.$$
    This in turn means that the columns of $A$ are not linearly independent over $\Q$. So they are not linearly independent over $\Q_{S,\infty}$ by \cite[Lemma 7.1]{KT}. Hence, $A \notin \GL(d+1,\Q_{S,\infty}^{d+1})$. Therefore, $A\Z_S^{d+1}$ is not a lattice in $\Q_{S,\infty}^{d+1}$. The reverse implication follows similarly.
\end{proof}

Let $K=\prod_{\nu \in S}K_\nu$ be as in Definition \ref{decaying}. For $n \in \N$ and $\nu\in S$, let $D_{n,\nu}$ denote a generic ball with center in $K_\nu$ and of radius $$r_n = \frac{1}{6}\frac{1}{2^{(d+2)/dl}} 2^{-\frac{(d+1)}{dl}(n+1)}.$$

We define $D_n:=\prod_{\nu \in S} D_{n,\nu}$. Without loss of generality, we assume that $K\subset \prod_{\nu\in S\setminus\infty}\Z_\nu^d \times [0,1)^d,$ and hence \begin{equation}\label{D}D_n\subset \prod_{\nu\in S\setminus\infty} C_\nu\Z_\nu^d \times [-C_\infty, C_\infty ]^d,\end{equation} for some $C_\nu>0, C_\infty>0.$ 
Note that these constants will depend on the definition of $r_n$.

Let
\begin{equation}
    B_n:=\left \{(\bq,q_0)\in \Z^{d+1}~\left|\begin{cases}
        2^n\leq \Vert (\bq,q_0)\Vert_\infty\leq 2^{n+1} \text{ if } \infty\notin S,\\
        2^n\leq \vert q_0\vert_\infty\leq 2^{n+1} \text{ if } \infty\in S
    \end{cases}\right.\right\}.
\end{equation}
Since $\sum_{n \in \N} (2^{n\frac{d+1}{d}}\psi(2^n))^\alpha < \infty,$ in Theorem \ref{main1} we can assume that for any $c > 0$ and some sufficiently large $n$, \begin{equation}\psi(2^n) < c2^{-\frac{n(d+1)}{d}}.\end{equation}\label{psi2n}
 
From the covering lemma \cite[\S 3]{PV}, and the fact that nonarchimedean balls either contain one another or do not intersect, the following lemma follows.

\begin{lemma}\label{covering1}
There exists a disjoint collection $\mathcal{D}_n$ of balls $D_n$ of radius $r_n$, such that $$K\subset\begin{cases}\begin{aligned}&\bigcup_{D_n \in \mathcal{D}_n} \prod_{\nu\in S\setminus\infty} D_{n,\nu}\times 3D_{n,\infty}, \infty\in S\\
&\bigcup_{D_n \in \mathcal{D}_n} D_n, \infty\notin S\end{aligned}.\end{cases}$$    
\end{lemma}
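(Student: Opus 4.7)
The plan is to combine the classical Vitali-type covering lemma of \cite[Section 3]{PV} with the ultrametric property of the non-archimedean places in $S$, treating the two cases of the dichotomy separately.

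The starting observation is the non-archimedean fact: for any $\nu\in S\setminus\{\infty\}$, two balls of equal radius $r_n$ in $\Q_\nu^d$ either coincide or are disjoint. Consequently, two product balls $B(\bx,r_n)$ and $B(\by,r_n)$ with $\bx,\by\in K$ either agree in every non-archimedean factor or are already disjoint in at least one such factor. When $\infty\notin S$, this observation is by itself enough: the entire ball $B(\bx,r_n)$ is non-archimedean, so the distinct balls in the family $\{B(\bx,r_n):\bx\in K\}$ are pairwise disjoint. Selecting one ball from each coincidence class, over a finite subcover furnished by compactness of $K$, yields the desired disjoint collection $\mathcal{D}_n$.

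When $\infty\in S$, I would group the balls $\{B(\bx,r_n):\bx\in K\}$ into equivalence classes according to their common non-archimedean projection, and within each class apply the covering lemma of \cite[Section 3]{PV} to the archimedean factors $B_\infty(\bx_\infty,r_n)\subset\R^d$. This produces, inside each class, a disjoint subfamily of archimedean balls whose $3$-dilates cover the archimedean projection of the class. Re-assembling each selected archimedean ball with the fixed non-archimedean factor of its class produces the collection $\mathcal{D}_n$. Disjointness is inherited from two sources: balls in different classes are already separated in some non-archimedean factor, while balls within the same class are separated in the archimedean factor by Vitali. The $3$-dilate covering claim follows because any $\bx\in K$ belongs to some class, and its archimedean projection is then covered by one of the selected $3$-dilates in that class, giving $\bx\in \prod_{\nu\in S\setminus\infty} D_{n,\nu}\times 3D_{n,\infty}$ for some $D_n\in\mathcal{D}_n$.

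I do not foresee any substantive obstacle; the only point warranting a little care is to carry out the Vitali selection within each class using centers $\bx_\infty$ that arise from actual points $\bx\in K$, so that each selected $D_n$ remains a ball centered at a point of $K$ as required by the subsequent measure-theoretic applications. Everything else is a routine unravelling of the product structure of $\Q_{S,\infty}^d$ together with the two covering inputs.
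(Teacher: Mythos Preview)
Your proposal is correct and follows essentially the same approach as the paper: the paper's proof is a single sentence invoking the covering lemma of \cite[\S 3]{PV} together with the fact that non-archimedean balls either contain one another or are disjoint, and you have simply unpacked how these two ingredients combine in each of the two cases.
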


\subsection{Proof of Theorem \ref{main1} when $\infty \notin S$}

\begin{lemma}\label{intersec1}
    Let $\nu\in S$. Let $\mathcal{D}_n$ be the disjoint collection of balls $D_{n,\nu}$ from Lemma \ref{covering1}. Suppose there exists some $\frac{\bq}{q_0} \in \Q^d$ such that, $B_\nu\left(\frac{\bq}{q_0}, \left(\psi(\norm{\tilde{\bq}}_\infty)\right)^{1/l}\right) \cap D_{n,\nu} \neq \varnothing$, where $\tilde{\bq}=(\bq,q_0)\in B_n$. Then, $\frac{\bq}{q_0} \in D_{n,\nu}$. 
\end{lemma}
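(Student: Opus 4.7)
The whole statement lives in the non-archimedean world, since $\infty \notin S$ forces $\nu$ to be a $p$-adic valuation for which $\Vert\cdot\Vert_\nu$ is an ultrametric norm on $\Q_\nu^d$. In this setting any two balls either contain one another or are disjoint. So once we know that the ball $B_\nu\bigl(\bq/q_0,\psi(\Vert\tilde\bq\Vert_\infty)^{1/l}\bigr)$ has radius at most $r_n$, the hypothesis that it meets $D_{n,\nu}$ will force it, and in particular its center $\bq/q_0$, into $D_{n,\nu}$. The entire proof therefore reduces to a size comparison between $\psi(\Vert\tilde\bq\Vert_\infty)^{1/l}$ and $r_n$, together with the ultrametric inclusion principle.

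First I would use that $\tilde\bq=(\bq,q_0)\in B_n$ implies $\Vert\tilde\bq\Vert_\infty\ge 2^n$. Monotonicity of $\psi$ then yields
\[
\psi(\Vert\tilde\bq\Vert_\infty)^{1/l}\;\le\;\psi(2^n)^{1/l}.
\]
By the convergence hypothesis $\sum_{n\in\N}(2^{n(d+1)/d}\psi(2^n))^\alpha<\infty$ we may (as noted right after the definition of $B_n$) assume that $\psi(2^n)<c\,2^{-n(d+1)/d}$ for any fixed $c>0$ once $n$ is large enough. I would choose $c$ small enough, depending only on $d$ and $l$, so that
\[
c^{1/l}\,2^{-n(d+1)/(dl)}\;<\;\tfrac{1}{6}\,2^{-(d+2)/(dl)}\,2^{-(d+1)(n+1)/(dl)}\;=\;r_n,
\]
which is clearly possible because the $n$-dependent factor $2^{-n(d+1)/(dl)}$ is the same on both sides, leaving only a purely numerical comparison. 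Thus for all sufficiently large $n$,
\[
\psi(\Vert\tilde\bq\Vert_\infty)^{1/l}\;<\;r_n.
\]

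Now I invoke the ultrametric property of $\Q_\nu^d$: the two balls $B_\nu(\bq/q_0,\psi(\Vert\tilde\bq\Vert_\infty)^{1/l})$ and $D_{n,\nu}=B_\nu(\mathrm{center},r_n)$ either are disjoint or one contains the other. Since by assumption they intersect, and since the first has radius strictly smaller than the second, we must have $B_\nu(\bq/q_0,\psi(\Vert\tilde\bq\Vert_\infty)^{1/l})\subseteq D_{n,\nu}$. In particular the center $\bq/q_0$ lies in $D_{n,\nu}$, which is the conclusion.

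There is no genuine obstacle here; the lemma is really a structural consequence of working over non-archimedean places, so the only point worth executing carefully is the quantitative check that the radius $\psi(\Vert\tilde\bq\Vert_\infty)^{1/l}$ is dominated by $r_n$ on $B_n$, which is a clean calculation using the explicit definition of $r_n$ and the asymptotic bound on $\psi(2^n)$. It is worth noting that this is exactly where the $\infty\notin S$ hypothesis is used: in the archimedean case the ultrametric inclusion fails and a genuinely different argument (the full simplex lemma) is required, which is why the archimedean case is treated separately later.
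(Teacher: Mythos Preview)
Your proposal is correct and follows essentially the same approach as the paper's own proof: bound $\psi(\Vert\tilde\bq\Vert_\infty)^{1/l}\le\psi(2^n)^{1/l}<r_n$ using monotonicity and the convergence hypothesis, then invoke the ultrametric inclusion principle for non-archimedean balls. The paper's version is terser (it just writes $\psi(2^n)<r_n^l$ without spelling out the choice of $c$), but the argument is identical.
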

\begin{proof}
     Since $\psi$ is monotonically decreasing and $\tilde{\bq}\in B_n$, $ \psi(\norm{\tilde{\bq}}_\infty) \leq \psi(2^n) < r_n^l$ for all large $n.$ If two balls intersect in $\Q_\nu$, one must be contained in the other. Hence the conclusion follows.

\end{proof}
\begin{lemma}\label{lem1}
    Let $A= A(\frac{\bp_1}{q_1},\cdots,\frac{\bp_{d+1}}{q_{d+1}})$, with $\frac{\bp_i}{q_i}\in\Q^d, i=1,\cdots,d+1$, is defined as in Equation \eqref{defnA}. Suppose 
    \begin{equation} 
    0<\Vert (\bp_i,q_i)\Vert_\infty < 2^{n+1} \text{ for all } i = 1,\cdots, d+1,\end{equation}
     and $A\Z_S^{d+1}$ is a lattice in $\Q_{S,\infty}^{d+1}$. Then
$$\prod_{\nu\in S}\vert \det A\vert_\nu> \frac{1}{2^{(d+1)(n+1)}}.$$
\end{lemma}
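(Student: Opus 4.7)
The strategy is to clear denominators and apply the product formula to $\det A \in \Q^*$, reducing the bound to an archimedean estimate on an integer determinant. I would set $D := \diag(q_1,\dots,q_{d+1})$ and $B := D A$; then $B$ is an integer matrix whose $i$-th row is $(q_i, p_1^{(i)},\dots, p_d^{(i)}) \in \Z^{d+1}$, of sup-norm strictly less than $2^{n+1}$, and $\det B = \bigl(\prod_{i=1}^{d+1} q_i\bigr)\det A$. The hypothesis that $A\Z_S^{d+1}$ is a lattice together with \cref{lattice} forces $\det A \neq 0$ in $\Q$, whence $\det B \in \Z \setminus \{0\}$.

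Next I would invoke the product formula $\prod_\nu|\cdot|_\nu = 1$ (ranging over all valuations of $\Q$) applied to $\det A$. Since $\infty \notin S$, this rewrites as
$$\prod_{\nu \in S}|\det A|_\nu = \biggl(|\det A|_\infty \prod_{\substack{p \text{ prime}\\ p \notin S}}|\det A|_p\biggr)^{-1},$$
so the goal becomes an upper bound on the parenthetical quantity by $2^{(d+1)(n+1)}$. For each finite prime $p \notin S$, the integrality $\det B \in \Z$ gives $|\det B|_p \leq 1$, hence $|\det A|_p = |\det B|_p/|\prod_i q_i|_p \leq |\prod_i q_i|_p^{-1}$. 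Multiplying over such $p$ and applying the product formula to the nonzero integer $\prod_i q_i$ (using $|\prod_i q_i|_\mu \leq 1$ at all finite places) yields $\prod_{p \notin S}|\det A|_p \leq |\prod_i q_i|_\infty \prod_{p \in S}|\prod_i q_i|_p \leq |\prod_i q_i|_\infty$.

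Combining this with $|\det A|_\infty = |\det B|_\infty/|\prod_i q_i|_\infty$, the factors of $|\prod_i q_i|_\infty$ cancel and the problem reduces to bounding $|\det B|_\infty$. Since every entry of $B$ has absolute value strictly less than $2^{n+1}$, Hadamard's inequality (or a direct Leibniz expansion) gives $|\det B|_\infty$ of order $(2^{n+1})^{d+1} = 2^{(d+1)(n+1)}$, completing the argument.

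The main obstacle is matching the explicit constant in the displayed inequality: Hadamard naively produces $(d+1)^{(d+1)/2}(2^{n+1})^{d+1}$ and Leibniz produces $(d+1)!\,(2^{n+1})^{d+1}$, each carrying a dimension-dependent overhead. Recovering the clean bound $>1/2^{(d+1)(n+1)}$ as stated would require either sharpening the archimedean determinant estimate by exploiting the structure of $B$---its first column is $(q_1,\dots,q_{d+1})^T$ and all entries are integers with sup-norm at most $2^{n+1}-1$---or absorbing the dimension-dependent constant into the strict inequality on the row norms. This is the step I would scrutinize most carefully.
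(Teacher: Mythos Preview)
Your approach is essentially the paper's: both reduce to $\prod_{\nu\in S}|\det A|_\nu \geq 1/|\det B|_\infty$ where $B = DA$ is the integer matrix with $i$-th row $(q_i,\bp_i)$. The paper's write-up is shorter because, rather than invoking the full product formula and tracking primes outside $S$, it sets $w := \det B = (q_1\cdots q_{d+1})\det A \in \Z\setminus\{0\}$ and uses directly that $\prod_{\nu\in S}|w|_\nu \cdot |w|_\infty$ is a positive integer (the $S$-free part of $|w|$), hence $\geq 1$; combined with $|q_i|_\nu \leq 1$ for the finite places $\nu\in S$, this gives $\prod_{\nu\in S}|\det A|_\nu \geq 1/|w|_\infty$ in one line. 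Your product-formula manipulation unpacks exactly this.

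Your concern about the constant is well-founded, and the paper does not resolve it either: it simply asserts $|w|_\infty \leq 2^{(d+1)(n+1)}$ without comment, whereas Leibniz or Hadamard only yield this up to a factor of $(d+1)!$ or $(d+1)^{(d+1)/2}$. So the strict inequality displayed in the lemma, with that exact constant, is not actually established by either argument. For the paper's purposes this is cosmetic: the lemma is used only in \cref{contradiction}, where the covolume lower bound is compared against a volume carrying an extra factor $2^{-(d+2)}$ coming from the choice of radius $r_n$, and any dimension-dependent constant can be absorbed by shrinking $r_n$. You have correctly identified a small imprecision in the stated bound that the paper's own proof glosses over.
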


\begin{proof}
 By \cite[Theorem 6.1]{KT} we have that the covolume (with respect to the normalized Haar measure $\lambda_{S\cup \infty}$ on $\Q_{S,\infty}^{d+1}$) of $A\Z_S^{d+1}$ in $\Q_{S,\infty}^{d+1}$ is equal to $$c(\det A)=\prod_{\nu\in S}\vert \det A\vert_\nu\vert \det A\vert_\infty.
$$
   Next note that $\det A = \frac{w}{q_1\dots q_{d+1}}$, for some $w\in \Z\setminus \{0\}$, where $\vert w\vert_\infty \leq 2^{(n+1)(d+1)}.$ 
Hence 
$$
\prod_{\nu\in S} \abs{\frac{w}{q_1\dots q_{d+1}}}_\nu \abs{\frac{w}{q_1\dots q_{d+1}}}_\infty > \frac{1}{\vert q_1 \dots q_{d+1}\vert_\infty},
$$ since $\prod_{\nu\in S} \vert w\vert_\nu \vert w\vert_\infty\in \N\setminus \{0\}.$
This implies that 
$$
\prod_{\nu\in S}\vert \det A\vert_\nu> \frac{1}{\vert w\vert_\infty}> \frac{1}{2^{(d+1)(n+1)}}.
$$

\end{proof}

\begin{lemma}\label{fundomain1}
    The fundamental domain of $A\Z_S^{d+1}$ as a lattice in $\Q_{S,\infty}^{d+1}$ is given by $A(\prod_{\nu\in S}\Z_\nu \times [0,1))^{d+1}$.
\end{lemma}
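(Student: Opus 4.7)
The plan is to reduce to a base case and then transport. First I would establish the base claim: $F := \prod_{\nu \in S}\Z_\nu \times [0,1) \subset \Q_{S,\infty}$ is a fundamental domain for the diagonally embedded $\Z_S$. Granted that, $F^{d+1}$ is immediately a fundamental domain for $\Z_S^{d+1}$ acting componentwise on $\Q_{S,\infty}^{d+1}$. Since $A\Z_S^{d+1}$ is a lattice by hypothesis, Lemma \ref{lattice} implies $A \in \GL(d+1, \Q_{S,\infty})$, so $A$ is a bijective $\Q_{S,\infty}$-linear automorphism of $\Q_{S,\infty}^{d+1}$. Any such automorphism sends a fundamental domain of $\Z_S^{d+1}$ to one of the image lattice, hence $A F^{d+1}$ is a fundamental domain for $A\Z_S^{d+1}$, which is exactly the assertion.

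For the existence part of the base case, given $((x_\nu)_{\nu \in S}, y) \in \Q_{S,\infty}$, I would construct $z \in \Z_S$ so that $x_\nu - z \in \Z_\nu$ for every $\nu \in S$ and $y - z \in [0,1)$. Write each $x_\nu = a_\nu + b_\nu$ with $b_\nu \in \Z_\nu$ and $a_\nu$ a finite segment of the $\nu$-adic expansion, so $a_\nu \in \Z[1/p_\nu] \subset \Z_S$. Set $z_0 := \sum_{\nu \in S} a_\nu \in \Z_S$. The crucial observation is that for distinct primes $\mu, \nu \in S$ one has $|p_\nu|_\mu = 1$, hence $a_\nu \in \Z[1/p_\nu] \subset \Z_\mu$. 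Therefore $z_0 \equiv a_\mu \pmod{\Z_\mu}$ for every $\mu \in S$, which gives $x_\mu - z_0 \in \Z_\mu$. Correcting the real coordinate by subtracting $n := \lfloor y - z_0 \rfloor \in \Z \subset \Z_S$ places the real part of $y - (z_0 + n)$ into $[0,1)$ while preserving the $\nu$-adic coordinates in $\Z_\nu$, so $z := z_0 + n$ is the desired lattice element.

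For uniqueness, if $z_1, z_2 \in \Z_S$ both translate the point into $F$, then $w := z_1 - z_2 \in \Z_S$ satisfies $|w|_\nu \leq 1$ for every $\nu \in S$ and $|w|_\infty < 1$. The defining property of $\Z_S$ already yields $|w|_\sigma \leq 1$ at every finite $\sigma \notin S$, so $|w|_\sigma \leq 1$ at every finite place, forcing $w \in \Z$; combined with $|w|_\infty < 1$ this gives $w = 0$. The only real content sits in this base case, where the diagonal construction $z_0 = \sum_{\nu \in S} a_\nu$ is essentially a pocket version of strong approximation for $\Z_S$; once it is in hand, the passage from $\Z_S$ to $\Z_S^{d+1}$ and then to $A\Z_S^{d+1}$ is a formal consequence of invertibility of $A$ and change of variables for Haar measure.
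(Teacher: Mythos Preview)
Your proposal is correct and follows the same logical skeleton as the paper: identify the fundamental domain of $\Z_S^{d+1}$ in $\Q_{S,\infty}^{d+1}$, then push it forward by the invertible map $A$. The paper's proof simply asserts the base case as known and calls the rest straightforward, whereas you supply an explicit strong-approximation construction (the decomposition $z_0=\sum_{\nu\in S}a_\nu$ followed by an integer shift) together with a uniqueness argument; this is more detailed but not a different route.
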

\begin{proof}
Since the fundamental domain of $\Z_S^{d+1}$ in $\Q_{S,\infty}^{d+1}$ is $\prod_{\nu\in S\setminus\infty}\Z_\nu^{d+1} \times [0,1)^{d+1},$ the conclusion of the lemma is straightforward. 
\end{proof}

\begin{lemma}\label{fundomain2} Let $A= A(\frac{\bp_1}{q_1},\cdots,\frac{\bp_{d+1}}{q_{d+1}})$, with $\frac{\bp_i}{q_i}\in D_n, i=1,\cdots,d+1$. Let $\Delta$ be the fundamental domain of the lattice $A\Z_S^{d+1}$ in $\Q_{S,\infty}^{d+1}$. Suppose $(\bp_i,q_i)\in B_n$ for all $i=1,\cdots,d+1.$ Then
    \begin{equation}
       \Delta \subseteq (\prod_{\nu\in S}\Z_\nu \times D_n) \times {A[0,1]}^{d+1} \subseteq \Q_{S\setminus\infty}^{d+1} \times \R^{d+1},
    \end{equation}
\end{lemma}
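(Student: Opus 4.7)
The plan is to invoke Lemma \ref{fundomain1}, which identifies $\Delta$ as $A\bigl(\prod_{\nu\in S}\Z_\nu^{d+1}\times[0,1)^{d+1}\bigr)$, and to split the required containment into an archimedean and a non-archimedean factor. Since $A$ acts coordinate-wise on $\Q_{S,\infty}^{d+1}=\prod_{\nu\in S}\Q_\nu^{d+1}\times\R^{d+1}$, one has $\Delta=\prod_{\nu\in S}A\Z_\nu^{d+1}\times A[0,1)^{d+1}$, and the archimedean inclusion $A[0,1)^{d+1}\subseteq A[0,1]^{d+1}$ is immediate. The task therefore reduces to showing, for each non-archimedean $\nu\in S$, that $A\Z_\nu^{d+1}\subseteq\Z_\nu\times D_{n,\nu}$ as subsets of $\Q_\nu\times\Q_\nu^d=\Q_\nu^{d+1}$.

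First I would observe that for $n$ sufficiently large $r_n<1$, and since $D_{n,\nu}$ is a ball of radius $r_n$ centered in $K_\nu\subseteq\Z_\nu^d$, the ultrametric inequality forces $D_{n,\nu}\subseteq\Z_\nu^d$. In particular every entry of $A$ lies in $\Z_\nu$, so $A\Z_\nu^{d+1}\subseteq\Z_\nu^{d+1}$; this already places the first coordinate of any element of $A\Z_\nu^{d+1}$ inside $\Z_\nu$. For $\by=(y_0,y_1,\ldots,y_d)\in\Z_\nu^{d+1}$ the explicit formula $(A\by)_i=y_0+\tfrac{\bp_i}{q_i}\cdot(y_1,\ldots,y_d)$ gives, for each $i\geq 2$,
\[
(A\by)_i-(A\by)_1=\bigl(\tfrac{\bp_i}{q_i}-\tfrac{\bp_1}{q_1}\bigr)\cdot(y_1,\ldots,y_d),
\]
and the ultrametric inequality, together with $\bp_i/q_i,\bp_1/q_1\in D_{n,\nu}$ (so that their difference has $\nu$-norm $<r_n$) and $\|(y_1,\ldots,y_d)\|_\nu\leq 1$, yields $|(A\by)_i-(A\by)_1|_\nu<r_n$ for every $i\geq 2$.

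Consequently the vector $((A\by)_2,\ldots,(A\by)_{d+1})$ lies in a ball of radius $r_n$ in $\Q_\nu^d$ whose center is the constant vector with entry $(A\by)_1\in\Z_\nu$. The final step is to identify this ball with $D_{n,\nu}$ itself: because $D_{n,\nu}$ is a generic ball of radius $r_n$ in $\Q_\nu^d$ (any two balls of the same radius are either equal or disjoint cosets of $B_\nu(0,r_n)$), one is free to take it centered at $\bp_1/q_1$, and by translating the fundamental domain by an appropriate element of $A\Z_S^{d+1}$ one may arrange the above constant vector to lie in the same coset as $\bp_1/q_1$. I expect this centering and coset alignment to be the main obstacle in writing a fully rigorous proof; once it is handled, the inclusion $A\Z_\nu^{d+1}\subseteq\Z_\nu\times D_{n,\nu}$ follows for every $\nu\in S$, completing the verification of the non-archimedean factor and hence the lemma.
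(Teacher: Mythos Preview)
Your reduction via Lemma~\ref{fundomain1} and the computation $|(A\by)_i-(A\by)_1|_\nu<r_n$ are both correct and in fact supply more detail than the paper's one-line assertion. But you have correctly sensed that the final step fails: the literal containment $A\Z_\nu^{d+1}\subseteq\Z_\nu\times D_{n,\nu}$ is \emph{false} in general. Taking $\by=(1,0,\ldots,0)$ gives $A\by=(1,1,\ldots,1)$, and there is no reason the constant vector $(1,\ldots,1)\in\Q_\nu^d$ should lie in the small ball $D_{n,\nu}$. Your proposed fix---re-centering $D_{n,\nu}$ or translating the fundamental domain by a lattice element---cannot rescue this, because the ``center'' $(A\by)_1$ you found varies with $\by$; no single translate of $D_{n,\nu}$ captures all of $A\Z_\nu^{d+1}$.

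The resolution is that the lemma is only used for the volume bound in Lemma~\ref{contradiction}, and for that purpose a measure-preserving shear suffices. The unipotent map
\[
T(z_1,\ldots,z_{d+1}):=(z_1,\,z_2-z_1,\,\ldots,\,z_{d+1}-z_1)
\]
has $|\det T|_\nu=1$, and your computation shows precisely that $T(A\Z_\nu^{d+1})\subseteq\Z_\nu\times B_\nu(0,r_n)^d$. Since $B_\nu(0,r_n)^d$ and $D_{n,\nu}$ are balls of the same radius in $\Q_\nu^d$, they have equal $\lambda_\nu$-measure, and the inequality $\lambda_\nu(A\Z_\nu^{d+1})\le\lambda_\nu(\Z_\nu\times D_{n,\nu})$ follows. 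So the statement as written (and the paper's own proof) is literally incorrect, but the intended consequence is salvaged exactly by the differences $(A\by)_i-(A\by)_1$ you already computed; you were one shear away from finishing.
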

\begin{proof}
    Using Lemma \ref{fundomain1}, it is enough to show that $A(\prod_{\nu\in S\setminus\infty}\Z_\nu^d \times [0,1)^d]) \subset (\prod_{\nu\in S\setminus\infty}\Z_\nu \times D_n) \times A[0, 1]^{d+1}$.

    This is equivalent to showing that for all $\nu\in S$, $$A\Z_\nu^{d+1} \subset \Z_\nu \times D_{n,\nu}.$$ 
    The containment follows from the definition of the matrix $A$, and $\frac{\bp_i}{q_i}\in D_n$. 

\end{proof}
\begin{lemma}\label{contradiction}
    The hypotheses of Lemma \ref{fundomain2} are never possible. 
\end{lemma}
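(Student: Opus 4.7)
The plan is a covolume-versus-volume contradiction for the lattice $A\Z_S^{d+1}$. Suppose towards contradiction that the hypotheses of Lemma \ref{fundomain2} hold; in particular, by Lemma \ref{lattice}, $A\Z_S^{d+1}$ is a lattice in $\Q_{S,\infty}^{d+1}$, whose fundamental domain I denote by $\Delta$. The goal is to bound $\lambda_{S\cup\infty}(\Delta)$ both from below (via the determinant formula of Lemma \ref{lem1}) and from above (via the containment of Lemma \ref{fundomain2}), and observe that the two bounds are incompatible.

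On the one hand, by \cite[Theorem 6.1]{KT} (as invoked already in Lemma \ref{lem1}), the $\lambda_{S\cup\infty}$-measure of $\Delta$ equals the covolume of the lattice, namely $\prod_{\nu\in S}|\det A|_\nu\cdot|\det A|_\infty$, which Lemma \ref{lem1} bounds from below by $|\det A|_\infty\cdot 2^{-(d+1)(n+1)}$. On the other hand, Lemma \ref{fundomain2} places $\Delta$ inside $\prod_{\nu\in S}(\Z_\nu\times D_{n,\nu})\times A[0,1]^{d+1}$. The product Haar measure of this containing set factors as $\prod_{\nu\in S}\lambda_\nu(\Z_\nu\times D_{n,\nu})\cdot \lambda_\infty(A[0,1]^{d+1})$. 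The archimedean factor equals $|\det A|_\infty$, and each non-archimedean factor equals $\lambda_\nu(D_{n,\nu})\leq r_n^d$, using the normalization $\lambda_\nu(\Z_\nu)=1$ together with the standard upper bound on the Haar measure of a $\nu$-adic ball of radius $r_n$. Hence the containing set has measure at most $r_n^{dl}\cdot|\det A|_\infty$.

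Combining the two estimates and cancelling $|\det A|_\infty$, the contradiction reduces to verifying that $2^{-(d+1)(n+1)}<r_n^{dl}$ fails. Plugging in the definition $r_n=\tfrac{1}{6}\cdot 2^{-(d+2)/(dl)}\cdot 2^{-(d+1)(n+1)/(dl)}$ gives $r_n^{dl}=\tfrac{1}{6^{dl}\cdot 2^{d+2}}\cdot 2^{-(d+1)(n+1)}$, so the putative inequality becomes $6^{dl}\cdot 2^{d+2}<1$, which is plainly false. The moral is that the radius $r_n$ was engineered so that the volume budget of $\Delta$ inside any allowed product neighborhood is strictly exceeded by the lattice covolume; this is what forces the rationals $\bp_1/q_1,\dots,\bp_{d+1}/q_{d+1}$ to be $\Q$-linearly dependent, i.e., to lie on a common hyperplane in $\Q^d$. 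The only slightly delicate points are the product-measure factorization of $\Delta$ through $A$ and the non-archimedean ball estimate $\lambda_\nu(D_{n,\nu})\leq r_n^d$; both are standard, so the remaining argument is essentially arithmetic.
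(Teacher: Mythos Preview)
Your proof is correct and follows essentially the same covolume-versus-volume argument as the paper: the lower bound on $\lambda_{S\cup\infty}(\Delta)$ from Lemma~\ref{lem1} and the upper bound from the containment in Lemma~\ref{fundomain2} are combined, $|\det A|_\infty$ is cancelled, and the explicit choice of $r_n$ forces the contradiction. Your treatment is slightly more explicit in tracking the factor $6^{dl}$ in $r_n^{dl}$, whereas the paper simply bounds the measure of $D_n$ by $2^{-(d+2)}\,2^{-(d+1)(n+1)}$, but the substance is identical.
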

\begin{proof}
   By Lemma \ref{lem1}, \begin{equation}\label{cov>}
   \prod_{\nu\in S\setminus\infty}\vert \det (A)\vert_\nu\geq \frac{1}{2^{(d+1)(n+1)}}\implies \cov(A\Z_S^{d+1})\geq \frac{1}{2^{(d+1)(n+1)}} \vert \det (A)\vert_\infty. \end{equation}

Suppose $\Delta$ is the fundamental domain of  $A\Z_S^{d+1}$.
   By Lemma \ref{fundomain2},  \begin{equation}
       \Delta \subseteq \left(\prod_{\nu\in S}\Z_\nu \times D_n \right) \times A[0,1]^{d+1}.
    \end{equation}
    But by the definition of $r_n,$ the radius of $D_{n,\nu}$,  \begin{equation}
       \begin{aligned} \cov(A\Z_S^{d+1}) \leq \lambda_{S\cup\{\infty\}}\left(\prod_{\nu\in S}\Z_\nu \times D_n \times A[0,1]^{d+1}\right) \leq &\left(\frac{1}{2^{(d+2)} }2^{-(d+1)(n+1)}\right) \vert \det(A)\vert_\infty \\
        &< \frac{1}{2^{(d+1)(n+1)}}\vert \det(A)\vert_\infty.
        \end{aligned}
    \end{equation}
This contradicts Equation \eqref{cov>}.
\end{proof}

\subsection{Proof of Theorem \ref{main1} when $\infty \in S$}
\begin{lemma}\label{intersec2}
   Let $\mathcal{D}_n$ be the disjoint collection of balls $D_{n,\nu}$ as in Lemma \ref{covering1}. Suppose there exists some $\frac{\bq}{q_0} \in \Q^d$ such that for each $\nu \in S\setminus \infty$, $\underbrace{B_\nu\left(\frac{\bq}{q_0}, (\psi(\abs{q_0}_\infty))^{1/l}\right)}_{\subset \Q_\nu^d} \cap D_{n,\nu}  \neq \varnothing$, and $\underbrace{B_\infty\left(\frac{\bq}{q_0}, (\psi(\abs{q_0}_\infty))^{1/l}\right)}_{\subset\R^d} \cap 3D_{n,\infty}  \neq \varnothing$, where $\tilde{\bq} = (\bq,q_0) \in B_n$. Then, $\frac{\bq}{q_0} \in \left(\prod_{\nu \in S \setminus \infty} D_{n,\nu} \times 6D_{n,\infty}\right)$. 
\end{lemma}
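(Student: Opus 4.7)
\medskip

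The plan is to argue the non-archimedean and archimedean components of $\bq/q_0$ separately, combining the intersection hypotheses with a sharp size comparison between $\psi(|q_0|_\infty)^{1/l}$ and the ball radius $r_n$.

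First, since $\tilde{\bq}=(\bq,q_0)\in B_n$, we have $|q_0|_\infty\geq 2^n$, and the monotonicity of $\psi$ yields $\psi(|q_0|_\infty)\leq \psi(2^n)$. By the summation condition in Theorem \ref{main1}, for arbitrary $c>0$ and all sufficiently large $n$, $\psi(2^n)< c\,2^{-n(d+1)/d}$, as recorded right after the definition of $B_n$. Taking $l$-th roots gives
\begin{equation*}
\psi(|q_0|_\infty)^{1/l} < c^{1/l}\, 2^{-n(d+1)/(dl)}.
\end{equation*}
Comparing with $r_n=\tfrac{1}{6}\cdot 2^{-(d+2)/(dl)}\cdot 2^{-(d+1)(n+1)/(dl)}$, one sees that for $c$ sufficiently small (independent of $n$) and $n$ large enough we have $\psi(|q_0|_\infty)^{1/l}\leq r_n$.

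Next, for each non-archimedean place $\nu\in S\setminus\infty$, the ball $B_\nu(\bq/q_0,\psi(|q_0|_\infty)^{1/l})$ and the ball $D_{n,\nu}$ sit inside the ultrametric space $\Q_\nu^d$. Since they intersect and the first has radius no bigger than that of $D_{n,\nu}$, the ultrametric property forces $B_\nu(\bq/q_0,\psi(|q_0|_\infty)^{1/l})\subseteq D_{n,\nu}$, exactly as in Lemma \ref{intersec1}. In particular $\bq/q_0\in D_{n,\nu}$ for every $\nu\in S\setminus\infty$.

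Finally, for the archimedean place, I would use the ordinary triangle inequality. Let $\bx_\infty$ denote the center of $D_{n,\infty}$ and pick any point $\by$ in $B_\infty(\bq/q_0,\psi(|q_0|_\infty)^{1/l})\cap 3D_{n,\infty}$. Then
\begin{equation*}
\left\|\tfrac{\bq}{q_0}-\bx_\infty\right\|_\infty \leq \left\|\tfrac{\bq}{q_0}-\by\right\|_\infty+\|\by-\bx_\infty\|_\infty < \psi(|q_0|_\infty)^{1/l}+3r_n \leq r_n+3r_n <6r_n,
\end{equation*}
so $\bq/q_0\in 6D_{n,\infty}$. Combining the two cases establishes the desired containment. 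The main technical point is the quantitative comparison in the first step: one must verify that the constant in the upper bound $\psi(2^n)<c\,2^{-n(d+1)/d}$ (which is at our disposal up to taking $n$ large) can be chosen small enough to dominate the explicit constant in the definition of $r_n$; this is the only place where the precise value of $r_n$ enters, and beyond it the argument reduces to elementary ultrametric and metric geometry.
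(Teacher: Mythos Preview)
Your proof is correct and follows essentially the same approach as the paper: compare $(\psi(|q_0|_\infty))^{1/l}$ with $r_n$ using the summation hypothesis, then use the ultrametric containment for $\nu\in S\setminus\infty$ and the ordinary triangle inequality for $\nu=\infty$ to land $\bq/q_0$ in $D_{n,\nu}$ and $6D_{n,\infty}$ respectively. The only cosmetic difference is that you spell out the constant comparison between $c^{1/l}2^{-n(d+1)/(dl)}$ and $r_n$ explicitly, while the paper simply asserts that $(\psi(|q_0|_\infty))^{1/l}<r_n$ for large $n$; both arrive at the bound $r_n+3r_n=4r_n<6r_n$.
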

\begin{proof}
    For $\nu = \infty$, fix an arbitrary point $\bx \in B_\infty\left(\frac{\bq}{q_0}, (\psi(\abs{q_0}_\infty))^{1/l}\right) \cap 3D_{n,\infty}$. Furthermore, we denote the center of $3D_{n,\infty}$ as $k_{n,\infty}$. 
    
    Without loss of generality, we can similarly assume (by picking some sufficiently large $n$) that $(\psi(\abs{q_0}_\infty))^{1/l} <  r_n$. So, we therefore have that $\Vert\bx - \frac{\bq}{q_0}\Vert_\infty < r_n$. Additionally, by the definition of $D_n$, we have $\norm{k_{n,\infty}-\bx}_\infty < 3r_n$. So, by the triangle inequality, \begin{equation}
        \norm{\frac{\bq}{q_0} - k_{n,\infty}}_\infty < 4r_n<6r_n,
    \end{equation}
    and so, $\frac{\bq}{q_0} \in 6D_{n,\infty}$ as desired.
    
    Similarly, by stronger triangle inequality  for $\nu\in S\setminus\infty,$        $B_\nu\left(\frac{\bq}{q_0}, \left(\psi(\abs{q_0}_\infty)\right)^{1/l}\right) \subseteq D_{n,\nu}$ and hence $\frac{\bq}{q_0} \in D_{n,\nu}$. 
\end{proof}

\begin{lemma}\label{lem2}
    Let $A= A(\frac{\bp_1}{q_1},\cdots,\frac{\bp_{d+1}}{q_{d+1}})$, with $\frac{\bp_i}{q_i}\in\Q^d, i=1,\cdots,d+1$, is defined as in Equation \eqref{defnA}. Suppose     
   \begin{equation}\label{height_condition} 
    2^n <\abs{q_i}_\infty < 2^{n+1} \text{ for all } i = 1,\cdots,d+1,\end{equation}
    
    and $A\Z_S^{d+1}$ is a lattice in $\Q_{S,\infty}^{d+1} = \Q_S^{d+1}$. Then,
    $$\cov(A\Z_S^{d+1}) > \frac{1}{2^{(d+1)(n+1)}}.$$
\end{lemma}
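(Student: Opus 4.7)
The plan is to follow the same template as Lemma \ref{lem1}, but exploit the fact that now $\Q_{S,\infty}^{d+1} = \Q_S^{d+1}$, so there is no additional archimedean factor tacked on. By \cite[Theorem 6.1]{KT}, the covolume of $A\Z_S^{d+1}$ in $\Q_S^{d+1}$ is just
$$\cov(A\Z_S^{d+1}) = \prod_{\nu\in S}\abs{\det A}_\nu,$$
with no extra $\abs{\det A}_\infty$ factor (since $\infty\in S$ is already one of the places being multiplied over).

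First I would clear denominators row-by-row exactly as in Lemma \ref{lem1} to write $\det A = w/(q_1\cdots q_{d+1})$ for some $w\in\Z$; since $A\Z_S^{d+1}$ is a lattice, Lemma \ref{lattice} guarantees $w\neq 0$. The key replacement for the bound $\abs{w}_\infty \leq 2^{(n+1)(d+1)}$ used in Lemma \ref{lem1} is the product formula for $w\in\Z\setminus\{0\}$:
$$1 = \prod_{\text{all } \nu}\abs{w}_\nu = \Bigl(\prod_{\nu\in S}\abs{w}_\nu\Bigr)\cdot\prod_{p\notin S}\abs{w}_p.$$
Since $\abs{w}_p \leq 1$ at every finite place $p\notin S$ (as $w$ is an integer), this rearranges to $\prod_{\nu\in S}\abs{w}_\nu \geq 1$.

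Next I would bound the denominator. Each $q_i\in\Z$, so $\abs{q_i}_\nu \leq 1$ for every non-archimedean $\nu\in S$, while the hypothesis \eqref{height_condition} gives $\abs{q_i}_\infty < 2^{n+1}$. Multiplying,
$$\prod_{\nu\in S}\abs{q_1\cdots q_{d+1}}_\nu \;\leq\; \prod_{i=1}^{d+1}\abs{q_i}_\infty \;<\; 2^{(d+1)(n+1)}.$$
Combining the two estimates,
$$\cov(A\Z_S^{d+1}) = \frac{\prod_{\nu\in S}\abs{w}_\nu}{\prod_{\nu\in S}\abs{q_1\cdots q_{d+1}}_\nu} \;>\; \frac{1}{2^{(d+1)(n+1)}},$$
which is the desired bound; the strictness is inherited from the strict inequality $\abs{q_i}_\infty<2^{n+1}$.

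I do not expect any serious obstacles here: the only subtlety is keeping straight that when $\infty\in S$ the archimedean valuation is absorbed into the product over $S$, so one does not need the extra combinatorial bound on $\abs{w}_\infty$ that was needed in Lemma \ref{lem1}. The product formula does all the work of Lemma \ref{lem1}'s inequality $\prod_{\nu\in S}\abs{w}_\nu\,\abs{w}_\infty \in \N\setminus\{0\}$ more cleanly, since we have one valuation fewer to isolate.
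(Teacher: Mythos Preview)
Your proposal is correct and follows essentially the same route as the paper: express the covolume as $\prod_{\nu\in S}\abs{\det A}_\nu$, write $\det A = w/(q_1\cdots q_{d+1})$ with $w\in\Z\setminus\{0\}$, use the product formula to get $\prod_{\nu\in S}\abs{w}_\nu\geq 1$, and bound the denominator using $\abs{q_i}_\nu\leq 1$ at the finite places together with the height condition at $\infty$. Your write-up is in fact a bit cleaner, since you keep the product over all of $S$ rather than splitting off the archimedean place.
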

\begin{proof}
   Firstly, we know that $$\cov(A\Z_S^{d+1}) = \prod_{\nu \in S \setminus \infty} \abs{\det A}_\nu \abs{\det A}_\infty,$$  with respect to the normalized Haar measure $\lambda_S$ on $\Q_{S}^{d+1}$.
     We can write $\det A = \frac{w}{q_1\dots q_{d+1}}$ for some $w \in \Z\setminus \{0\}$. Then, \begin{align*}
        \prod_{\nu \in S \setminus \infty} \abs{\det A}_\nu \abs{\det A}_\infty &= \prod_{\nu \in S \setminus \infty} \abs{\frac{w}{q_1\dots q_{d+1}}}_\nu \abs{\frac{w}{q_1\dots q_{d+1}}}_\infty.
\end{align*}
Since $w\in \Z\setminus \{0\}, $ we know $\prod_{\nu\in S} \vert w\vert_\nu \vert w\vert_\infty\geq 1.$ Hence the term above is 
        $$\geq \frac{1}{\prod_{\nu \in S \setminus \infty} \abs{q_1 \dots q_{d+1}}_\nu \abs{q_1 \dots q_{d+1}}_\infty}. $$
        Since $\vert q_i\vert_\nu\leq 1$ for $\nu\in S\setminus\infty$, $i=1,\cdots,d+1.$
    Therefore, by \eqref{height_condition} the term above is 
        $$\geq \frac{1}{\abs{q_1\dots q_{d+1}}_\infty} 
        > \frac{1}{2^{(d+1)(n+1)}}.$$
   
\end{proof}

\begin{lemma}\label{fundomain_with_infinity} Let $A= A(\frac{\bp_1}{q_1},\cdots,\frac{\bp_{d+1}}{q_{d+1}})$, with $\frac{\bp_i}{q_i}\in \prod_{\nu \in S \setminus \infty} D_{n,\nu} \times 6D_{n,\infty}, i=1,\cdots,d+1$. Let $\Delta$ be the fundamental domain of the lattice $A\Z_S^{d+1}$ in $\Q_{S,\infty}^{d+1}$. Suppose $(\bp_i,q_i)\in B_n$ for all $i=1,\cdots,d+1.$ Then,\begin{equation}
        \Delta \subseteq (\prod_{\nu \in S \setminus \infty} \Z_\nu \times \prod_{\nu \in S \setminus \infty} D_{n,\nu}) \times \left([0,1] \times 6D_{n,\infty}\right) \subseteq \Q_{S\setminus\infty}^{d+1} \times \R^{d+1}.
    \end{equation}
\end{lemma}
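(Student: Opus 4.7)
The proof mirrors Lemma \ref{fundomain2}, with the archimedean place now belonging to $S$ and therefore requiring explicit treatment. By Lemma \ref{fundomain1}, we take the fundamental domain in the form $\Delta = A \cdot F$, where $F = \prod_{\nu \in S \setminus \infty}\Z_\nu^{d+1} \times [0,1)^{d+1}$ is the fundamental domain of $\Z_S^{d+1}$ in $\Q_{S,\infty}^{d+1} = \Q_S^{d+1}$. Because $A$ acts coordinatewise across all places, $\Delta$ decomposes as $\prod_{\nu \in S \setminus \infty} A\Z_\nu^{d+1} \times A[0,1)^{d+1}$, so the containment reduces to two separate tasks, one for the non-archimedean places and one for $\infty$.

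At each non-archimedean $\nu \in S \setminus \infty$, the argument is exactly that of Lemma \ref{fundomain2}. Since $\frac{\bp_i}{q_i} \in D_{n,\nu} \subseteq K_\nu \subseteq \Z_\nu^d$ (valid once $r_n < 1$, i.e.\ for $n$ large enough), every entry of $A$ lies in $\Z_\nu$ and in particular $A\Z_\nu^{d+1} \subseteq \Z_\nu^{d+1}$. Writing $A\bx = (y_1,\ldots,y_{d+1})$ with $y_i = x_1 + \frac{\bp_i}{q_i}\cdot(x_2,\ldots,x_{d+1})$, the ultrametric inequality together with $\norm{\frac{\bp_i}{q_i} - \frac{\bp_j}{q_j}}_\nu < r_n$ (both centers lie in the same $\nu$-adic ball $D_{n,\nu}$) gives $\abs{y_i - y_j}_\nu < r_n$ for all $i,j$. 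Translating the fundamental domain by an appropriate constant in $\Q_\nu^{d+1}$, the corresponding factor then fits inside $\Z_\nu \times D_{n,\nu}$.

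At $\infty$, the hypothesis $\frac{\bp_i}{q_i} \in 6D_{n,\infty}$ yields $\norm{\frac{\bp_i}{q_i} - \mathbf{k}_{n,\infty}}_\infty < 6r_n$, where $\mathbf{k}_{n,\infty}$ denotes the common center of $6D_{n,\infty}$; by the ordinary triangle inequality, $\norm{\frac{\bp_i}{q_i} - \frac{\bp_j}{q_j}}_\infty < 12r_n$. For $\bx \in [0,1)^{d+1}$ the components of $A\bx$ satisfy the analogous linear-algebraic estimate, and the factor $6$ in $6D_{n,\infty}$ is precisely what absorbs the constant coming from using the usual triangle inequality in place of the ultrametric one. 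After translating, the first coordinate of $A\bx$ can be arranged to lie in $[0,1]$ while the remaining $d$ coordinates land within $6r_n$ of $\mathbf{k}_{n,\infty}$, completing the archimedean half of the containment.

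\textbf{Main obstacle.} The real difficulty lies in the archimedean step: the loss of the ultrametric inequality at $\infty$ forces an explicit constant-tracking argument and is the very reason Lemma \ref{intersec2} passes from $3D_{n,\infty}$ to $6D_{n,\infty}$. Ensuring these constants align with the normalization of $r_n$, so that the subsequent volume contradiction (the $\infty \in S$ analogue of Lemma \ref{contradiction}) goes through, is the most delicate part of the bookkeeping.
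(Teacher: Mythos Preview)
Your outline matches the paper's proof exactly: invoke Lemma~\ref{fundomain1} to write $\Delta=\prod_{\nu\in S\setminus\infty}A\Z_\nu^{d+1}\times A[0,1)^{d+1}$ and then verify the two inclusions $A\Z_\nu^{d+1}\subset \Z_\nu\times D_{n,\nu}$ and $A[0,1]^{d+1}\subset [0,1]\times 6D_{n,\infty}$ place by place. The paper simply asserts these two inclusions, citing only that each $\frac{\bp_i}{q_i}$ lies in the relevant ball, so at the level of strategy you are reproducing the paper's argument with more commentary.

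Where you add detail, however, you introduce a gap. A translation by a fixed vector cannot achieve either inclusion. For $\bx\in[0,1)^{d+1}$ the first output coordinate $(A\bx)_1=x_1+\mathbf r_1\cdot(x_2,\dots,x_{d+1})$ already sweeps out an interval of length $1+d\norm{\mathbf r_1}_\infty$, and no additive shift collapses this to an interval of length~$1$; the same objection applies to your non-archimedean ``translate into $\Z_\nu\times D_{n,\nu}$'' step, since for $\bx=(1,0,\dots,0)$ one gets $A\bx=(1,\dots,1)$, whose last $d$ coordinates need not lie in the small ball $D_{n,\nu}$. What is really being used (and what the paper's bare assertion should be read as) is a \emph{unipotent}, hence volume-preserving, change of coordinates $z_1=y_1$, $z_i=y_i-y_1$ for $i\ge 2$: then $z_i=(\mathbf r_i-\mathbf r_1)\cdot(x_2,\dots,x_{d+1})$ has $\nu$-norm at most $r_n$ ultrametrically and is $O(d\,r_n)$ at $\infty$, so the image lands in a product set of the stated measure. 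Since Lemma~\ref{contradiction2} only uses the volume of $\Delta$, this suffices; but ``translating'' should be replaced by this row reduction in your write-up.
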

\begin{proof}
    Firstly, using Lemma \ref{fundomain1}, it is enough to show $\Delta=A(\prod_{\nu\in S\setminus\infty}\Z_\nu \times [0,1))^{d+1} \subset \prod_{\nu\in S\setminus\infty}(\Z_\nu \times D_{n,\nu}) \times [0,1]\times 6D_{n,\infty}. $
   
    For $\nu \in S \setminus \infty$, since $\frac{\bp_i}{q_i} \in D_{n,\nu}$ for $i = 1,\dots,d+1$, $$A\Z_\nu^{d+1} \subset \Z_\nu \times D_{n,\nu}.$$
    For the $\infty \in S$ case, similarly, since $\frac{\bp_i}{q_i} \in 6D_{n,\infty}$ for $i=1,\dots,d+1$,
    
    $$A[0,1]^{d+1} \subset [0,1] \times 
    6D_{n,\infty}.$$
\end{proof}

\begin{lemma}\label{contradiction2}
    The hypotheses of Lemma \ref{fundomain_with_infinity} are never possible. 
\end{lemma}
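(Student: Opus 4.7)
The plan is to mirror the strategy of Lemma \ref{contradiction}: use Lemma \ref{lem2} to bound the covolume of $A\Z_S^{d+1}$ from below, use Lemma \ref{fundomain_with_infinity} to bound it from above by the Haar measure of the enclosing product set, and show that the choice of $r_n$ forces the upper bound to be strictly smaller than the lower bound, a contradiction. Since in this case $\infty\in S$, the ambient space for the lattice is $\Q_{S,\infty}^{d+1}=\Q_S^{d+1}$ and the Haar measure splits as $\prod_{\nu\in S\setminus\infty}\lambda_\nu \cdot \lambda_\infty$, so there is no stray $|\det A|_\infty$ factor as in the archimedean-free case.

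First, I would invoke Lemma \ref{lem2} to obtain
\begin{equation*}
\cov(A\Z_S^{d+1}) > \frac{1}{2^{(d+1)(n+1)}}.
\end{equation*}
Next, I would assume for contradiction that the hypotheses of Lemma \ref{fundomain_with_infinity} hold, so that the fundamental domain $\Delta$ of $A\Z_S^{d+1}$ is contained in
\begin{equation*}
\Big(\prod_{\nu\in S\setminus\infty}\Z_\nu \;\times\; \prod_{\nu\in S\setminus\infty}D_{n,\nu}\Big)\;\times\;\big([0,1]\times 6D_{n,\infty}\big).
\end{equation*}

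I would then bound the Haar measure of this set coordinate by coordinate: $\lambda_\nu(\Z_\nu)=1$ for each $\nu\in S\setminus\infty$; each nonarchimedean ball $D_{n,\nu}\subset\Q_\nu^d$ satisfies $\lambda_\nu(D_{n,\nu})\le r_n^d$; the $[0,1]$ factor contributes $1$; and the archimedean ball $6D_{n,\infty}\subset\R^d$ in sup norm has Lebesgue measure $(12 r_n)^d$. Multiplying gives an upper bound of $12^d r_n^{dl}$. Inserting $r_n^{dl}=\frac{1}{6^{dl}\cdot 2^{d+2}}\cdot 2^{-(d+1)(n+1)}$ and simplifying $12^d=2^d 6^d$, the upper bound becomes
\begin{equation*}
\frac{1}{4\cdot 6^{d(l-1)}}\cdot 2^{-(d+1)(n+1)} \;<\; 2^{-(d+1)(n+1)},
\end{equation*}
contradicting the lower bound from Lemma \ref{lem2}.

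The only subtlety, and hence the main thing to be careful about, is the volume estimate for nonarchimedean balls: one should use the nonarchimedean bound $\lambda_\nu(B_\nu(x,r))\le r^d$ rather than the archimedean $(2r)^d$, so that the constants compress enough to beat the $1/2^{d+2}$ slack built into the definition of $r_n$. Once this bookkeeping is in place, the $6$ in the definition of $r_n$ exactly absorbs the $6D_{n,\infty}$ dilation appearing in Lemma \ref{fundomain_with_infinity}, and the contradiction is immediate.
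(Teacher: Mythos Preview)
Your proposal is correct and follows exactly the paper's argument: combine the lower bound on $\cov(A\Z_S^{d+1})$ from Lemma~\ref{lem2} with the containment of $\Delta$ from Lemma~\ref{fundomain_with_infinity}, then compare the Haar measure of the enclosing box to derive a contradiction. Your explicit volume accounting (using $\lambda_\nu(D_{n,\nu})\le r_n^{d}$ for $\nu\neq\infty$ and $\lambda_\infty(6D_{n,\infty})=(12r_n)^{d}$) is in fact more careful than the paper's one-line estimate, and yields the same contradiction.
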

\begin{proof}
    By Lemma \ref{lem2}, \begin{equation}\label{cov>>}
        \cov(A\Z_S^{d+1}) = \prod_{\nu \in S \setminus \infty} \abs{\det A}_\nu \abs{\det A}_\infty > \frac{1}{2^{(d+1)(n+1)}}.
    \end{equation}

Suppose $\Delta$ is the fundamental domain of  $A\Z_S^{d+1}$. By Lemma \ref{fundomain_with_infinity}, \begin{equation}
    \Delta \subseteq (\prod_{\nu \in S \setminus \infty} \Z_\nu \times \prod_{\nu \in S \setminus \infty} D_{n,\nu}) \times [0,1] \times 6D_{n,\infty}.
\end{equation}
Also note, by the definition of radius of $D_n$,
\begin{equation}
    \begin{aligned}
        \lambda_S \left(\prod_{\nu \in S \setminus \infty} \Z_\nu \times \prod_{\nu \in S \setminus \infty} D_{n,\nu} \times [0,1] \times 6D_{n,\infty}\right) &\leq \frac{1}{2^{d+2}} 2^{-(d+1)(n+1)} \\
        &< \frac{1}{2^{(d+1)(n+1)}}.
    \end{aligned}
\end{equation}
This contradicts Equation \eqref{cov>>}.

\end{proof}

\subsection{Finishing the proof}
From Lemma \ref{contradiction} and Lemma \ref{contradiction2} the following lemma follows, since we cannot have $d+1$ linearly independent elements of $D_n$ if $\infty \notin S$ (or $\prod_{\nu \in S \setminus \infty} D_{n,\nu} \times 6D_{n,\infty}$ if $\infty \in S$), with height restrictions as in $B_n$.


\begin{lemma}\label{simplex}
Every rational $\frac{\bq}{q_0}\in \begin{cases}\begin{aligned} &D_n \text{ if } \infty\notin S,\\
 &\prod_{\nu\in S\setminus \infty} D_{n,\nu}\times 6D_{n,\infty} \text{ if } \infty\in S\end{aligned}\end{cases}$, with $(\bq,q_0)\in B_n$ must lie in 
some $\cL(D_n)_{\Q}:=\{\bx\in \Q^{d}~|~ c_1x_1+\cdots+c_dx_d=b\},$ for some $(c_1,\cdots,c_d,b)\in \Z^{d+1}\setminus \{0\}.$
\end{lemma}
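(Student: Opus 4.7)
The plan is to argue by contradiction using the preceding lemmas. Assume toward a contradiction that the set of rational points
$$R_n := \left\{\tfrac{\bq}{q_0} \in \Q^d : (\bq, q_0) \in B_n \text{ and } \tfrac{\bq}{q_0} \in D_n \text{ (resp.\ } \prod_{\nu \in S \setminus \infty} D_{n,\nu} \times 6 D_{n,\infty}\text{)}\right\}$$
is not contained in any single rational affine hyperplane $\{c_1 x_1 + \cdots + c_d x_d = b\}$. The idea is to then extract from $R_n$ a collection of $d+1$ points whose corresponding columns in the matrix $A$ from Equation \eqref{defnA} are linearly independent over $\Q$, and to invoke Lemma \ref{contradiction} (if $\infty \notin S$) or Lemma \ref{contradiction2} (if $\infty \in S$) to reach a contradiction.

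To extract these $d+1$ points, I would proceed inductively. Pick any $\frac{\bp_1}{q_1} \in R_n$, and at step $k \leq d$, if the previously chosen $\frac{\bp_1}{q_1}, \ldots, \frac{\bp_k}{q_k}$ span an affine subspace of $\Q^d$ of dimension $k-1 < d$, this affine hull lies inside some rational affine hyperplane. By the contradiction hypothesis, $R_n$ is not contained in that hyperplane, so one can pick $\frac{\bp_{k+1}}{q_{k+1}} \in R_n$ lying outside it; this new point is affinely independent of the preceding ones. After $d+1$ steps, the chosen points fail to lie on any common rational hyperplane, so by Lemma \ref{lattice} the matrix $A = A(\frac{\bp_1}{q_1}, \ldots, \frac{\bp_{d+1}}{q_{d+1}})$ defines a genuine lattice $A\Z_S^{d+1}$ in $\Q_{S,\infty}^{d+1}$.

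With these $d+1$ rationals in hand, every hypothesis of Lemma \ref{contradiction} (respectively, Lemma \ref{contradiction2}) is satisfied: the height constraint $(\bp_i, q_i) \in B_n$, the location constraint $\frac{\bp_i}{q_i} \in D_n$ (respectively, $\prod_{\nu\in S\setminus\infty} D_{n,\nu}\times 6 D_{n,\infty}$), and the lattice property of $A\Z_S^{d+1}$. Those lemmas assert that exactly such a configuration is impossible, which delivers the contradiction. The heavy lifting has already been carried out in the earlier covolume bounds (Lemmas \ref{lem1}, \ref{lem2}) and the fundamental domain containments (Lemmas \ref{fundomain2}, \ref{fundomain_with_infinity}), so there is no real obstacle here; the only point requiring care is the translation of the geometric condition ``no common rational hyperplane contains the $d+1$ chosen points'' into the $\Q$-linear independence of the columns of $A$, which is precisely the equivalence supplied by Lemma \ref{lattice}.
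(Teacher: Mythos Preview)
Your proposal is correct and follows essentially the same approach as the paper: the paper's proof of Lemma~\ref{simplex} is simply the one-line remark that, by Lemmas~\ref{contradiction} and~\ref{contradiction2}, one cannot have $d+1$ affinely independent rationals in $D_n$ (resp.\ $\prod_{\nu\in S\setminus\infty} D_{n,\nu}\times 6D_{n,\infty}$) with heights in $B_n$, together with Lemma~\ref{lattice} to translate affine independence into the lattice condition. Your inductive extraction of $d+1$ affinely independent points just makes explicit the standard step the paper leaves to the reader.
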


Let us define $\mathcal{L}(D_n)_\nu:=\{\bx=(x_{\nu,i})\in \Q_\nu^d~|~c_1x_{1,\nu}+\cdots+c_d x_{\nu,d}=b\}$,
for every $\nu\in S\cup\{\infty\}.$ Also, define $\mathcal{L}(D_n):=\prod_{\nu\in S}\mathcal{L}(D_n)_\nu$.\\
\subsubsection{Proof for $\infty\notin S$} Let us consider $\infty\notin S.$ For $n \in \N$, we define \begin{equation}
        A_n := \bigcup_{\tilde\bq\in B_n} \prod_{\nu\in S}B_\nu\left(\frac{\bq}{q_0}, \left(\psi(\norm{\tilde{\bq}}_\infty)\right)^{1/l}\right),
    \end{equation} where $\tilde\bq=(\bq, q_0).$
    By definition $W^S(\psi)\cap K = \limsup_{n \to \infty} A_n \cap K$. If $A_n\cap D_n\neq\emptyset,$ then there exists some $\tilde{\bq}\in B_n$ such that for each $\nu\in S,$
    $$
    B_\nu\left(\frac{\bq}{q_0}, \left(\psi(\norm{\tilde{\bq}}_\infty)\right)^{1/l}\right) \cap D_{n,\nu}\neq\emptyset.
    $$
    Hence by Lemma \ref{intersec1}, for some sufficiently large $n$ we have $\frac{\bq}{q_0}\in D_{n,\nu}$ for all $\nu \in S$, which implies $\frac{\bq}{q_0}\in D_n$. Since $\tilde\bq\in B_n,$ and  $\frac{\bq}{q_0}\in D_n$ Lemma \ref{simplex} implies $\frac{\bq}{q_0}\in \cL(D_n)$. Hence for every $\nu\in S$, $B_\nu\left(\frac{\bq}{q_0}, \left(\psi(\norm{\tilde{\bq}}_\infty)\right)^{1/l}\right)\subset B_\nu\left(\frac{\bq}{q_0}, \left(\psi(2^n)\right)^{1/l}\right)\subset \cL(D_n)_\nu^\varepsilon,$ where $\varepsilon= \left(\psi(2^n)\right)^{1/l}.$ Thus $$
    \prod_{\nu\in S} B_\nu\left(\frac{\bq}{q_0}, \left(\psi(\norm{\tilde{\bq}}_\infty)\right)^{1/l}\right)\subset \prod_{\nu\in S}\cL(D_n)_\nu^\varepsilon.$$ Note $\cL(D_n)^\varepsilon=\prod_{\nu\in S} \cL(D_n)_\nu^\varepsilon.$
Therefore, we have 
\begin{equation}\label{Ainsideaffine}
    A_n\cap D_n\subset \cL(D_n)^\varepsilon \cap D_n.
\end{equation}

    Thus, \begin{align*}
        \mu(A_n \cap K) \stackrel{Lemma ~\ref{covering1}}{\leq}  \mu\left(A_n \cap \bigcup_{D_n \in \mathcal{D}_n}D_n\right) 
        &\leq \sum_{D_n \in \mathcal{D}_n} \mu(A_n \cap D_n) \\
        &\stackrel{\eqref{Ainsideaffine}}{\leq}  \sum_{D_n \in \mathcal{D}_n} \mu(\cL(D_n)^{(\varepsilon)} \cap D_n) \\
        &\stackrel{Definition ~\ref{decaying}}{\ll}  \sum_{D_n \in \mathcal{D}_n} (2^{n\frac{d+1}{dl}}\psi(2^n)^{1/l})^{\alpha l}\mu(D_{n}).
        \end{align*}
Since the collection $\mathcal{D}_n$ is disjoint, we can bound the above term  as follows,
        \begin{align*}
        &\leq (2^{n\frac{d+1}{d}}\psi(2^n))^\alpha  \mu(K) \ll (2^{n\frac{d+1}{d}}\psi(2^n))^\alpha.
    \end{align*}
  
    Therefore, we have that \begin{align*}
        \sum_{n \in \N} \mu(A_n \cap K) < \infty \iff \sum_{n \in \N} (2^{n\frac{d+1}{d}}\psi(2^n))^\alpha < \infty.
    \end{align*}
    The conclusion now follows from the convergence Borel-Cantelli lemma.

    \subsubsection{Proof for $\infty\in S$} The final part of the proof in the case $\infty\in S$, will be very similar to the previous case. For $n \in \N$, we define \begin{equation}
        A_n := \bigcup_{\tilde\bq\in B_n} \prod_{\nu\in S}B_\nu\left(\frac{\bq}{q_0}, \left(\psi(\vert q_0\vert_\infty)\right)^{1/l}\right).
    \end{equation} If $A_n\cap \bigcup_{D_n \in \mathcal{D}_n} \prod_{\nu \in S \setminus \infty} D_{n,\nu} \times 3D_{n,\infty} \neq\emptyset,$ then there exists some $\tilde{\bq}=(\bq,q_0)\in B_n$ such that for each $\nu\in S\setminus \infty,$
    $$
    B_\nu\left(\frac{\bq}{q_0}, \left(\psi(\vert q_0\vert_\infty)\right)^{1/l}\right) \cap D_{n,\nu}\neq\emptyset, B_\infty\left(\frac{\bq}{q_0}, \left(\psi(\vert q_0\vert_\infty)\right)^{1/l}\right) \cap 3D_{n,\infty}\neq\emptyset.
    $$ Then, by Lemma \ref{intersec2}, $\frac{\bq}{q_0} \in \left(\prod_{\nu \in S \setminus \infty} D_{n,\nu} \times 6D_{n,\infty}\right)$. Hence by Lemma \ref{simplex} and similar argument as before $$
    \prod_{\nu\in S} B_\nu\left(\frac{\bq}{q_0}, \left(\psi(\vert q_0\vert_\infty)\right)^{1/l}\right)\subset \prod_{\nu\in S}\cL(D_n)_\nu^\varepsilon,$$ where $\varepsilon= \left(\psi(2^n)\right)^{1/l}.$ 
    Thus, \begin{align*}
        \mu(A_n \cap K) &\stackrel{Lemma ~\ref{covering1}}{\leq}  \mu\left(A_n \cap \bigcup_{D_n \in \mathcal{D}_n} \prod_{\nu \in S \setminus \infty} D_{n,\nu} \times 3D_{n,\infty} \right) 
        \\ &\leq \sum_{D_n \in \mathcal{D}_n} \mu(A_n \cap \prod_{\nu \in S \setminus \infty} D_{n,\nu} \times 3D_{n,\infty}) \\
        &{\leq}  \sum_{D_n \in \mathcal{D}_n} \mu(\cL(D_n)^{(\varepsilon)} \cap \prod_{\nu \in S \setminus \infty} D_{n,\nu} \times 3D_{n,\infty}). 
        \end{align*}
Since $\mu$ is a product measure the above is,
        \begin{align*}
         &\leq \sum_{D_n \in \mathcal{D}_n} \prod_{\nu\in S\setminus \infty}\mu_\nu(\cL(D_n)_\nu^{(\varepsilon)} \cap D_{n,\nu}) \mu_\infty(\cL(D_n)_\infty^{(\varepsilon)} \cap 4D_{n,\infty}). 
         \end{align*}
        By decaying property in Definition \ref{decaying}, the above is 
        
         \begin{align*}
        &\stackrel{Definition ~\ref{decaying}}{\ll}  \sum_{D_n \in \mathcal{D}_n} (2^{n\frac{d+1}{dl}}\psi(2^n)^{1/l})^{\alpha l}\prod_{\nu\in S\setminus\infty}\mu_\nu(D_{n,\nu})\mu_\infty(4D_{n,\infty}). 
        \end{align*}
        Since $\mu_\infty$ is doubling as in Definition \ref{absf}, and by disjointness of $\mathcal{D}_n$, the above is, 
        \begin{align*}
        &\stackrel{Definition ~\ref{absf}}{\ll} (2^{n\frac{d+1}{d}}\psi(2^n))^\alpha  \mu(K)\\
        & \ll (2^{n\frac{d+1}{d}}\psi(2^n))^\alpha.
    \end{align*}
    The proof is now complete by the convergence Borel-Cantelli lemma.
   \begin{remark}
     Note that in the $\infty\in S$ case, we only need $\mu_\infty$ to be doubling, whereas we don't need doubling property for $\mu_\nu$ for all $\nu\in S\setminus \infty.$
    \end{remark}
\subsection{Closing remark}
The simplex lemma we proved in this paper can also be used to prove an analogue of \cite[Theorem 2]{PV} in $S$-arithmetic set-up, involving the $s$-dimensional Hausdorff measure. For more details, readers can refer to \cite[Theorem 2]{PV}. 
\subsection{Acknowledgement}
The authors thank support from the University of Michigan where the work was started and Uppsala University where the work was finished. SD is supported by the AMS Simons travel grant, and also by the Knut and Alice Wallenberg Foundation. SD likes to thank Ralf Spatzier for many exciting discussions and support. 

\bibliographystyle{plain}
\bibliography{simplexbib}
\end{document}